\newtheorem{theorem}{Theorem}[section]
\newtheorem{corollary}[theorem]{Corollary}
\newtheorem{lemma}[theorem]{Lemma}
\newtheorem{proposition}[theorem]{Proposition}
\newtheorem{definition}[theorem]{Definition}
\newtheorem{remark}[theorem]{Remark}
\newcommand{\Hom}{{\rm Hom}}
\newcommand{\Mm}{\mathcal{M}}
\newcommand{\Tt}{\mathcal{T}}
\newcommand{\Ss}{\mathcal{S}}
\newcommand{\Uu}{\mathcal{U}}
\def\NN{{\mathbb N}}
\begin{document}
\sloppy

\title[On two conjectures of Faith]{On two conjectures of Faith}

\subjclass[2000]{16T15, 16D50, 16D99} \keywords{QF ring, PF ring, ultrafilter, coalgebra, FGF conjecture, Faith conjecture}

\begin{abstract}
We prove that a profinite algebra whose left (right) cyclic modules are torsionless is finite dimensional and QF. We give a relative version of the notion of left (right) PF ring for pseudocompact algebras and prove it is left-right symmetric and dual to the notion of quasi-co-Frobenius coalgebras. We also prove two ring theoretic conjectures of Faith, in the setting (and supplementary hypothesis) of profinite algebras: any profinite semiartinian selfinjective algebra is finite dimensional and QF, and any FGF profinite algebra is finite dimensional QF.
\end{abstract}

\author{Mariana Haim, M.C.Iovanov, Blas Torrecillas}%$^*$}

%\date{February 03, 2010}

\maketitle

\section{Introduction}

A ring is called quasi Frobenius (qF) if it is right or left self-injective and right
or left artinian (all four combinations being equivalent). The study of these rings
grew out of the theory of representations of a finite group: if $G$ is finite, the group
algebra $\Bbbk G$ is quasi Frobenius (in fact, it is even a Frobenius algebra). Quasi-Frobenius rings capture the module theoretic properties of group algebras, or more generally, of Frobenius algebras.

A theorem of Faith and Walker \cite{FW} states that a ring R is quasi Frobenius if
and only if any left (or right) R-module embeds in a free left (or right) R-module.
More generally, a ring R is said to be left (right) FGF if this condition holds
for finitely generated left (right) R-modules. The FGF conjecture states that
every left FGF ring is quasi-Frobenius. It has been proved in several contexts,
in particular when the ring verifies some finiteness condition (for left or right
noetherian rings, for semiregular rings whose Jacobson radical is T-nilpotent)
and also when there is some duality between left and right R-modules (for left
and right FGF rings, in particular for commutative rings). In his survey \cite{FH}, Faith
gives a proof for completely FGF rings, i.e. in the case that every factor ring of
R is left FGF.

It is to expect that pseudocompact and profinite algebras are a good context to allow the FGF conjecture to hold, since they verify some sort of finiteness condition (the ``locally finite'' property of coalgebras and comodules). Pseudocompact algebras are topological algebras with a basis of neighborhoods of $0$ consisting of cofinite ideals, and which are Hausdorff complete; they are precisely the algebras arrising as duals of coalgebras, and they are always profinite. Also, any profinite algebra (i.e. prolimit of finite dimensional algebras) admits at least one such pseudocompact topology, i.e. is the dual of a coalgebra. Indeed, in this paper we prove that the FGF conjecture holds for pseudocompact algebras. Actually, we prove something more general and quite less expectable: every pseudocompact algebra whose left (or right) cyclic modules are torsionless is finite dimensional QF.

Another important open question is what has come to be called Faith's conjecture: ``Any left self-injective semiprimary ring is QF'' (see \cite{FH} and \cite{NY} for more information about this conjecture). We show that if $A$ is left semiartinian and left self-injective then $A$ is a finite dimensional QF-algebra, which is a more general statement than the above conjecture.

A ring $R$ such that every faithful left $R$-module generates the category $R$-Mod of left $R$-modules is called left pseudo-Frobenius (PF). In \cite{A}, Azumaya introduced this type of rings as a generalization of QF rings. We will see (Theorem 3.8) that for profinite algebras the notion is symmetric and also implies that the algebra is finite dimensional QF.

We also propose suitable analogues for the notions of PF in the setting of pseudocompact algebras. A ring $R$ is left PF if and only if it is left selfinjective and any left simple module embeds in $R$. For a pseudocompact algebra, there is a coalgebra $C$ such that $A=C^*$ and a class of rational $A$-modules. It is then natural to weaken the PF property to left ``Rat-PF'' by asking that $A$ is left self-injective and left simple {\it rational} modules embed in $A$. We show that this is also left-right symmetric and is precisely characterizing another important analogue notion introduced in the theory of coalgebras, that is, those of quasi-co-Frobenius coalgebras, or shortly QcF coalgebras. Recall that a coalgebra $C$ is said to be left QcF if and only if $C$ embeds in a product (equivalently, coproduct) power of $C^*$; this is also equivalent to saying that $C$ is projective as a left $C^*$-module (equivalently, right $C$-comodule). We show that $A=C^*$ is left (or right) Rat-PF if and only if $C$ is left and right Quasi-co-Frobenius. This connects the coalgebra notions with the module properties of the dual convolution algebra.

\section{Preliminaries}\label{pre}
In this section we fix the basic notations and recall a few definitions and basic concepts in coalgebra theory. We refer to \cite{DNR} and \cite{Sweedler} for more details.\\
The coalgebras we will work with will be $\Bbbk$-coalgebras, where $\Bbbk$ is a field. For coalgebras and comodules we use Sweedler's notation. We denote by $\ ^C{\mathcal M}$ and ${\mathcal M}^C$ the categories of left and right comodules over $C$ respectively. It is well known that both are Grothendieck categories and therefore they have enough injective objects. Similarly, if $A$ is an algebra over $\Bbbk$, we denote by $\ _A{\mathcal M}$ and ${\mathcal M}_A$ the categories of left and right modules over $A$ respectively.
%We denote by xiC and Chxi the respectively left and right comodules generated
%by the element x 2 C (i.e. the minimal left or right subcomodule of C containing
%x).

Let $V$ be a vector space. If $X$ is a subset of $V$ and $Y$ is a subset of $V^*$, then
define $X^\perp = \{f\in V^* \mid f(x) = 0 \ \forall x\in X\}\subseteq  V^*$ and $Y ^\perp = \{ v \in V \mid f(v) =0 \ \forall f\in Y\}\subseteq V
$.\\
The operator $\perp \perp$ is a closure operator on subspaces of $V^*$. The induced topology is called the finite topology of $V^*$. It is well known that for every subspace $S \subseteq V$, we have $ S^{\perp \perp} = S$ and that if $T$ is
a finite dimensional subspace of $V^*$ then $T^{\perp \perp} = T$ (i.e. every finite dimensional subset of $V^*$ is closed under the finite topology).
%In the special case in which $V$ is a coalgebra that we will call $C$, it is easy to see that if $I$ is a left (right) $C^*$-submodule of $C$, then $I^\perp$ is a right (left) submodule of $C^*$.\\

Let $C$ be a coalgebra. Consider $C$ as a right $C$-comodule. We recall that the coradical of $C$, denoted by $C_0$, is the sum of all simple right coideals in $C$.\\
We can define by recursion an ascending chain
$$0 = C_0 \subseteq C_1 \subseteq C_2 \subseteq \cdots \subseteq C_n \subseteq \cdots $$
of right subcomodules of $C$ as follows. Let $C_0 = soc(C)$ be the coradical of $C$ and for any $n \in \NN$ define $C_{n+1}$ such that $soc\left (\frac{C}{C_n}\right ) = \frac{C_{n+1}}{C_n}$. It can be proved that if we make the construction by regarding $C$ as a left comodule, we obtain the same chain. Thus, $C_n$ is a left and right subcomodule of $C$; more precisely, $C_n$ is a subcoalgebra of $C$. The above defined chain is called the \emph{coradical filtration} of $C$.
Since $C$ is the union of all its right subcomodules of finite dimension we have that
$$C =\bigcup_n C_n.$$
%If J is the Jacobson radical of C and C0;C1;    ;Cn;    is the coradical series of
%C, then
%J = C^\perp
%0 ; Jn+1 \subseteq C^\perp
%n :
\ \\
If $S$ is a simple right (left) comodule, we will denote by $E(S)$ its injective envelope. We can assume $E(S) \subseteq C$ for every right simple subcomodule $S$ of $C$ (the same for left simple comodules) and have
$$C =\bigoplus_S E(S),$$
where $S$ ranges among the terms of a decomposition $C_0=\bigoplus\limits_iS_i$ into a direct sum of simple right (left) comodules.% belongs to the family of all simple right (left) subcomodules of $C$.

Every right (left) $C$-comodule can be thought as a left (right) $C^*$-module via the action $f \rightharpoonup m = \sum m_0f(m_1)$ (respectively  $m \leftharpoonup f =\sum f(m_{-1})m_0)$. In particular $C$ will be a right and a left $C^*$-module.
Left (right) $C^*$-modules $M$ arrising this way from right (left) $C$-comodules are called \emph{rational} left (right) $C^*$-modules. That is, a left $C^*$-modujle $M$ is rational if for which for every $m\in M$ there are $(m_i)_{i=1,\dots,n}\in M$ and $(c_i)_{i=1,\dots,n}\in C$ such that $f \rightharpoonup m = \sum m_if(c_i)$ for all $f\in C^*$. This means that the category of right (left) $C$-comodules is equivalent to the category of left (right) rational $C^*$-modules.

\subsection*{Profinite and Pseudocompact Algebras}
The following statements are equivalent for an algebra $A$:
\begin{itemize}
\item $A$ is an inverse limit of finite dimensional algebras.
\item $A$ admits a Hausdorf and complete topological algebra structure, with a basis of neighborhoods of $0$ consisting of two sided ideals of finite codimension.
\item $A = C^*$, for some coalgebra $C$.
\end{itemize}
We refer the reader to \cite{DNR} for more details. In the first case, we say that $A$ is a profinite algebra. In the second case, $A$ carries a topology $\tau$ and we say that $(A,\tau)$ is a pseudocompact algebra. This topology comes from the structure of $C$, and $C^*$ is a pseudocompact algebra with the basis of neighborhoods of $0$ given by $X^\perp$, where $X$ ranges over the finite dimensional subcoalgebras of $C$. In fact, the category of pseudocompact algebras with continuous morphisms and that of coalgebras with coalgebra morphisms are in duality. However, note that it could be the case that a profinite algebra $A$ is of the form $A\cong C^*$ and $A\cong D^*$ for two different coalgebras $C$ and $D$, and so might admit two different structures of pseudocompact algebra (i.e. different topologies). However, some properties of such algebras do not depend on the topology (i.e. on the coalgebra giving rise to $A$), while others - such as those involving rational modules - require that the coalgebra $C$ is given (fixed). Many of the results we will prove in this paper turn out to be independent of the specific topological structure of $A$ (that is, of the coalgebra $C$ to which the algebra $A$ is dual), and will therefore be stated in the more general context of profinite algebras.

\section{FGF Conjecture for profinite algebras}

Let $R$ be a ring. We say that $R$ is a left D-ring if every left ideal in $R$ is an annihilator (of some subset of $R$; see \cite{FH} for example). It is easy to note that $R$ is a left $D$-ring if and only if every cyclic left $R$-module embeds in a (product) power of $R$. Indeed, any such embedding of some $R/I$ is given by a morphism of left modules $R\rightarrow R^\alpha$, $r\mapsto (rx_i)_{i\in \alpha}$ with kernel $I=\{r|rx_i=0,\,\forall i\}$, which is equivalent to saying that $I$ is the left annihilator of the set $\{x_i|i\in I\}$. Our first result will show that a profinite algebra which is a left (or right) D-ring must be finite dimensional.

In what follows, $A$ will be an algebra over a field $\Bbbk$ which is pseudocompact (or profinite), so $A=C^*$, where $C$ is some $\Bbbk$-coalgebra.

\begin{proposition}\label{1}
 Every left annihilator in $C^*$ is closed under the finite topology of $C^*$.
\end{proposition}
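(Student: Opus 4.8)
The plan is to translate the purely ring-theoretic description of a left annihilator in $C^*$ into the language of the pairing between $C$ and $C^*$, where closedness in the finite topology becomes essentially automatic. Recall that a left annihilator has the form $L=\{f\in C^*\mid fx=0 \text{ for all }x\in X\}$ for some subset $X\subseteq C^*$, and that the product here is the convolution product dual to the comultiplication of $C$. The guiding observation is that any set of the form $Y^\perp$ with $Y\subseteq C$ is closed in the finite topology: this is the Galois-connection identity $Y^\perp=(Y^\perp)^{\perp\perp}$, which follows from $Y\subseteq Y^{\perp\perp}$ together with the contravariance of $\perp$. Thus it suffices to exhibit a subset $Y\subseteq C$ with $L=Y^\perp$.

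The main step is the computation that realizes this. For $f,x\in C^*$ and $c\in C$ one has, in Sweedler notation, $(fx)(c)=\sum f(c_1)x(c_2)$, and since each $x(c_2)$ is a scalar this rewrites as $f\bigl(\sum c_1 x(c_2)\bigr)=f(x\rightharpoonup c)$, where $x\rightharpoonup c=\sum c_1 x(c_2)$ is the left $C^*$-module action on $C$ coming from its right comodule structure. Hence $fx=0$ if and only if $f$ vanishes on the subspace $x\rightharpoonup C$ of $C$. Running this over all $x\in X$ gives
\[
L=\Bigl\{f\in C^*\ \Big|\ f(x\rightharpoonup c)=0\ \text{ for all }x\in X,\ c\in C\Bigr\}=Y^\perp,\qquad Y=\sum_{x\in X}(x\rightharpoonup C)\subseteq C,
\]
where $Y$ is a subspace of $C$ because each $x\rightharpoonup(-)$ is linear. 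By the observation above, $L=Y^\perp$ is closed in the finite topology, which proves the proposition.

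The part requiring the most care is precisely this translation: one must recognize that a condition phrased through the multiplication of $C^*$ (the convolution product) can be re-expressed as vanishing against a subspace of $C$, thereby passing from the ring structure to the bilinear pairing that defines the topology. Once the identity $(fx)(c)=f(x\rightharpoonup c)$ is in hand, the rest is formal; in particular no finiteness or noetherian hypothesis on $X$ is needed, since arbitrary sets of the form $Y^\perp$ are closed. Alternatively, one could argue topologically, by checking that right multiplication $f\mapsto fx$ is continuous for the finite topology (it alters the value at $c$ only through the finitely many $c_1$ appearing in $\Delta(c)$) and that $\{0\}$ is closed, so that each single-element annihilator, and hence the intersection $L=\bigcap_{x\in X}\{f\mid fx=0\}$, is closed; but the direct computation above is shorter and makes the coalgebraic meaning transparent.
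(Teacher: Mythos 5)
Your proof is correct and is essentially the paper's own argument: both rewrite $f\cdot h=0$ as $f$ vanishing on the subspace $\{\sum c_1h(c_2)\mid c\in C\}=h\rightharpoonup C$, sum these subspaces over the annihilated set, and conclude the annihilator is of the form $Y^\perp$ with $Y\subseteq C$, hence closed. The only cosmetic difference is your use of the module-action notation $x\rightharpoonup c$ and the added (unneeded but harmless) remark about the alternative continuity argument.
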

\begin{proof}
 As $I$ is a left annihilator, there is some $H\subseteq C^*$, such that $I=\{f\in C^* \mid f\cdot h=0, \ \forall h\in H\}$. Now note that $f\cdot h=0$ if and only if $\sum f(c_1)h(c_2)=0$, for all $c\in C$, equivalently, $f\vert_{\{\sum c_1h(c_2)\mid c\in C\}}=0$. Thus, if we denote $X=\sum\limits_{h\in H}{\{\sum c_1h(c_2)\mid c\in C\}}$, we have that $f\in I$ if and only if $f\vert_X=0$, so $I=X^\perp$ and thus $I$ is closed.
\end{proof}

%----------------------------------------------------------------------------
%\begin{lemma}\label{2}
%Let $C$ be a coalgebra. The following assertions are equivalent:\\
%(i) Any left ideal of $C^*$ is closed.\\
%(ii) $C^*$ is left Noetherian.
%\end{lemma}
%\begin{proof}
%(ii)$\Rightarrow$(i) is easy, since any finitely generated left ideal of $C^*$ is closed (this is a well known fact; one can also see \cite[Lemma 1.1]{I}).\\
%(i)$\Rightarrow$(ii)Let $I_1\subseteq I_2\subseteq\dots\subseteq I_n\subseteq\dots$ be a chain of left ideals. They are closed, so $I_n=X_n^\perp$ with $X_n=I_n^\perp$, and we have a descending chain $X_1\supseteq X_2\supseteq\dots\supseteq X_n\supseteq\dots$ of left subcomodules of $C$. Let $I=\sum\limits_nI_n=\bigcup\limits_nI_n$ and $X=\bigcup\limits_nX_n$; since $I$ is closed we have $I=(I^\perp)^\perp=(\bigcup\limits_nI_n^\perp)^\perp=(\bigcup\limits_nX_n)^\perp=X^\perp$. If we assume that the chain $X_1\supseteq X_2\supseteq\dots$ does not terminate, we can choose $x_n\in X_n\setminus X_{n+1}$, and obviously the vector space sum $\sum\limits_nK_n+X$ is direct. This allows us to get an $f\in C^*$ such that $f(x_n)=1$ and $f\vert_X=0$ (completed suitably to a linear function on $C$). But then $f\in X^\perp=I$ and $f\notin X_n^\perp=I_n$ for all $n$, so $f\notin\bigcup\limits_nI_n=I$, a contradiction. This shows that the sequence $I_1\subseteq I_2\subseteq\dots I_n\subseteq\dots$ must terminate as well.
%\end{proof}
%--------------------------------------------------------------------------------

\begin{lemma}\label{obs}
Let $(I_n)_n$ be an ascending chain ($I_n\subseteq I_{n+1}$) of closed ideals of $A=C^*$. Then $\bigcup\limits_{n}I_n$ is closed if and only if the chain terminates ($I_n=I_{n+1}=...$).
\end{lemma}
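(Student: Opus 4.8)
The plan is to translate the statement through the $\perp$-duality between subspaces of $C$ and closed subspaces of $A=C^*$, where it becomes a purely linear-algebraic assertion. Since each $I_n$ is closed we have $I_n=I_n^{\perp\perp}$; set $X_n=I_n^\perp\subseteq C$, so that $X_n^\perp=I_n$ and $(X_n)_n$ is a descending chain of subspaces of $C$, which descends strictly exactly where $(I_n)_n$ ascends strictly (if $X_n=X_{n+1}$ then $I_n=X_n^\perp=X_{n+1}^\perp=I_{n+1}$). From the definition of $\perp$ one gets $\left(\bigcup_n I_n\right)^\perp=\bigcap_n I_n^\perp=\bigcap_n X_n=:X$, and since $\perp\perp$ computes the closure in the finite topology, the closure of $\bigcup_n I_n$ equals $X^\perp$. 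As $\bigcup_n I_n\subseteq X^\perp$ holds automatically, the union is closed if and only if $\bigcup_n I_n=X^\perp$.

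The backward implication is then immediate: if the chain terminates, $\bigcup_n I_n=I_N$ for some $N$, which is closed by hypothesis. For the forward implication I would argue by contraposition. Assuming the chain does not terminate, I may pass to a cofinal subsequence (which does not change the union) and so assume $I_1\subsetneq I_2\subsetneq\cdots$, hence $X_1\supsetneq X_2\supsetneq\cdots$ strictly. The goal is then to exhibit a functional $\varphi\in X^\perp\setminus\bigcup_n I_n$, i.e. one vanishing on $X=\bigcap_n X_n$ but on no individual $X_n$; this shows $\bigcup_n I_n\neq X^\perp=\overline{\bigcup_n I_n}$, so the union is not closed.

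To build $\varphi$, I would pass to $\overline{C}=C/X$ and work with the images $\overline{X}_n=X_n/X$, which still form a strictly descending chain with $\bigcap_n\overline{X}_n=0$. For each $n$ choose $\overline{v}_n\in\overline{X}_n\setminus\overline{X}_{n+1}$. These are linearly independent: in a nontrivial relation $\sum_i a_i\overline{v}_i=0$, the term of lowest index $m$ with $a_m\neq 0$ satisfies $a_m\overline{v}_m=-\sum_{i>m}a_i\overline{v}_i\in\overline{X}_{m+1}$, forcing $\overline{v}_m\in\overline{X}_{m+1}$, contrary to its choice. Extending $\{\overline{v}_n\}_n$ to a basis of $\overline{C}$ and setting $\overline{\varphi}(\overline{v}_n)=1$ for all $n$ (and $0$ on the remaining basis vectors) gives $\overline{\varphi}\in\overline{C}^*$ with $\overline{\varphi}|_{\overline{X}_n}\neq 0$ for every $n$. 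Lifting through the surjection $C\twoheadrightarrow\overline{C}$ yields $\varphi\in C^*=A$ with $\varphi|_X=0$, so $\varphi\in X^\perp$, while $\varphi|_{X_n}\neq 0$ gives $\varphi\notin X_n^\perp=I_n$ for all $n$; this $\varphi$ is the desired element.

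The only genuine subtlety is the linear independence of the witnesses $\overline{v}_n$ together with the fact that, $C^*$ being the full linear dual, we may prescribe $\varphi$ freely on a basis of $\overline{C}$; everything else is formal manipulation of the $\perp$-calculus. I expect this construction to be the crux, with the remaining steps following mechanically. Note also that only the vector-space (not the ideal) structure is used, so the argument in fact proves the assertion for arbitrary closed subspaces.
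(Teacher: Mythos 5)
Your proof is correct and follows essentially the same route as the paper's: dualize through $\perp$ to the descending chain $X_n=I_n^\perp$, pick witnesses $x_n\in X_n\setminus X_{n+1}$, and build a functional vanishing on $X=\bigcap_n X_n$ but on no $X_n$, which lies in the closure $X^\perp$ but outside the union. Your passage to the quotient $C/X$ and the explicit lowest-index independence argument simply make rigorous the paper's claim that the sum $X+\sum_n\Bbbk x_n$ is direct, and your contrapositive phrasing is equivalent to the paper's argument by contradiction.
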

\begin{proof}
If the sequence terminates then the assertion is trivial. For the converse, assume $I=\sum\limits_nI_n=\bigcup\limits_nI_n$ is closed, so $I=X^\perp$. Let $I_n=X_n^\perp$; then $I_n\subseteq I_{n+1}$ yields $X_n\supseteq X_{n+1}$. Note that $X=(X^\perp)^\perp=I^\perp=(\sum\limits_nI_n)^\perp=\bigcap\limits_nI_n^\perp=\bigcap\limits_n X_n$.
%Let $X=\bigcap\limits_nX_n$; since $I$ is closed we have $I=(I^\perp)^\perp=((\bigcup\limits_nI_n)^\perp	)^\perp=(\bigcap\limits_nI_n^\perp)^\perp=(\bigcap\limits_nX_n)^\perp=X^\perp$.
If we assume that the chain $X_1\supseteq X_2\supseteq\dots$ does not terminate, we can choose $x_n\in X_n\setminus X_{n+1}$, and we can easily see that the sum of vector spaces $X+\sum\limits_n\Bbbk x_n$ is direct. This allows us to get an $f\in C^*$ such that $f(x_n)=1$, for all $n$ and $f\vert_X=0$ (completed suitably to a linear function on $C$). But then $f\in X^\perp=I$ and $f\notin X_n^\perp=I_n$ for all $n$, so $f\notin\bigcup\limits_nI_n=I$, a contradiction. Therefore $X_n=X_{n+1}=...$ from some $n$ onward, and this shows that the sequence $I_1\subseteq I_2\subseteq\dots I_n\subseteq\dots$ must terminate as well.
%(ii) Follows from (i) in a standard way. 	
\end{proof}

\begin{remark}
Note that the above lemma also shows a remarkable fact about the closed ideals of a pseudocompact algebra $A=C^*$: any such ideal is either finitely generated, or otherwise, it is not countably generated! Indeed, if $I$ is countably generated then it is the union of an ascending chain of finitely generated ideals, which are then closed (see for example \cite[Lemma 1.1]{I}). Since $I$ is closed, $I$ must be the union of only finitely many of these ideals. So the above lemma restates equivalently that any closed countably generated ideal of $C^*$ is finitely generated.
\end{remark}

Part of the following Lemma is found in \cite[Lemma 3.2]{NT}. Although we only need the equivalence of (i) and (ii), we include (iii) with a new short proof which does not use any general module-theoretic results, and only uses the observation of Lemma \ref{obs}.

\begin{lemma}\label{2}
Let $C$ be a coalgebra. The following assertions are equivalent:\\
(i) Any left ideal of $C^*$ is closed.\\
(ii) $C^*$ is left Noetherian.\\
(iii) $C$ is an artinian left comodule (right $C^*$-module).
\end{lemma}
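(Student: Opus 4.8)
The plan is to reduce assertion (iii) to a statement about closed left ideals via the annihilator operator, and then play the three conditions off against Lemma \ref{obs}. First I would record an order-reversing bijection between left subcomodules of $C$ and closed left ideals of $C^*$. Recall that $C$, viewed as a left comodule, is exactly $C$ as a right $C^*$-module, and that its subcomodules coincide with its $C^*$-submodules. If $W\subseteq C$ is a left subcomodule, i.e. $\Delta(W)\subseteq C\otimes W$, then for $g\in C^*$ and $h\in W^\perp$ one computes $(g\cdot h)(w)=\sum g(w_1)h(w_2)=0$ for every $w\in W$ (as $w_2\in W$), so $W^\perp$ is a left ideal; moreover $W^\perp$ is always closed since $(S^\perp)^{\perp\perp}=S^\perp$ for any subspace $S\subseteq C$. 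Conversely $I\mapsto I^\perp$ sends a closed left ideal to a left subcomodule, and because $W^{\perp\perp}=W$ for subspaces of $C$ while $I^{\perp\perp}=I$ for closed $I$, these two maps are mutually inverse and reverse inclusions. Hence (iii), the descending chain condition on left subcomodules of $C$, is equivalent to the ascending chain condition on \emph{closed} left ideals of $C^*$.

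With this reduction in hand I would prove (i) $\Leftrightarrow$ (ii). For (i) $\Rightarrow$ (ii), take any ascending chain $I_1\subseteq I_2\subseteq\cdots$ of left ideals; by (i) each $I_n$ is closed and the left ideal $\bigcup_n I_n$ is closed as well, so Lemma \ref{obs} (whose argument only uses that the terms are closed \emph{subspaces}, hence applies verbatim to left ideals) forces the chain to terminate, and $C^*$ is left Noetherian. For (ii) $\Rightarrow$ (i), every left ideal of a left Noetherian ring is finitely generated, and every finitely generated left ideal of $C^*$ is closed (see \cite[Lemma 1.1]{I}), so every left ideal is closed.

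Finally I would close the loop with (iii). Assuming (ii), all left ideals are closed by the previous step, so the ascending chain condition on all left ideals coincides with that on closed left ideals, which by the reduction is precisely (iii). Conversely, assume (iii), i.e. the ascending chain condition on closed left ideals. Since finitely generated left ideals are closed, no strictly ascending chain of finitely generated left ideals can be infinite; consequently every left ideal $I$ is finitely generated (otherwise one could enlarge a finitely generated subideal indefinitely by adjoining generators not yet captured, producing such an infinite chain), whence $C^*$ is left Noetherian and (ii) holds. The main obstacle is the bookkeeping in the reduction step: one must check carefully that $\perp$ carries left coideals to left ideals (the side depends on which tensor factor the convolution pairs against) and that the correspondence is genuinely inclusion-reversing and exhaustive, so that chain conditions transfer without loss. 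Once that dictionary is set up, Lemma \ref{obs} and the closedness of finitely generated ideals do all the remaining work.
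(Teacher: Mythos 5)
Your proof is correct and uses essentially the same approach as the paper: the same three ingredients drive everything, namely Lemma \ref{obs}, the closedness of finitely generated left ideals (\cite[Lemma 1.1]{I}), and the $\perp$-duality between closed left ideals of $C^*$ and left subcomodules of $C$, with only the logical arrangement differing (you prove (i)$\Leftrightarrow$(ii) and (ii)$\Leftrightarrow$(iii) separately, while the paper runs the cycle (ii)$\Rightarrow$(i)$\Rightarrow$(iii)$\Rightarrow$(ii)). Your explicit verification that $\perp$ carries left subcomodules to closed left ideals and is inclusion-reversing is a detail the paper leaves implicit, but it is exactly the duality the paper invokes.
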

\begin{proof}
(ii)$\Rightarrow$(i) is easy, since any finitely generated left ideal of $C^*$ is closed (this is a well known fact; one can also see \cite[Lemma 1.1]{I}).\\
(i)$\Rightarrow$(iii) Follows by Lemma \ref{obs}: any ascending chain of left ideals of $C^*$ must terminate since their union is closed. Because every ideal is closed, the latices of the left ideals of $C^*$ and left subcomodules of $C$ are in duality by the $X\rightarrow X^\perp$ correspondence, and so $C$ is an artinian $C$-comodule. \\
(iii)$\Rightarrow$(ii) Let $I$ be a left ideal and assume it is not finitely generated; then there is a sequence of ideals $I_n=\sum\limits_{i=1}^nC^*a_i$ with $a_{n+1}\notin I_n$ for all $n$, so $I_1\subseteq I_2\subsetneq\dots\subsetneq I_n\subsetneq\dots$ is an ascending chain of left ideals. They are closed (since they are finitely generated), so $I_n=X_n^\perp$ with $X_n=I_n^\perp$, and we have a descending chain $X_1\supsetneq X_2\supsetneq\dots\supsetneq X_n\supseteq\dots$ of left subcomodules of $C$ which does not terminate (since if $X_n=X_{n+1}$ then $I_n=X_n^\perp=X_{n+1}^\perp=I_{n+1}$) which is a contradiction.
\end{proof}

In the following proof, we will use known facts on Loewy length of comodules, which are also found in \cite{IC}, Section 2.

\begin{theorem}\label{t1}
If a profinite algebra $A$ is a left D-ring, then it is finite dimensional.
\end{theorem}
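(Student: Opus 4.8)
The plan is to extract two finiteness consequences from the $D$-ring hypothesis---left Noetherianity and nilpotence of the Jacobson radical---and then assemble them into finite dimensionality. The first reduction is immediate from the machinery already in place: if $A=C^*$ is a left $D$-ring then every left ideal is a left annihilator, hence closed by Proposition~\ref{1}, and so by Lemma~\ref{2} the algebra $A$ is left Noetherian and $C$ is an artinian left comodule. From artinianity the socle $C_0=soc(C)$ must be finite dimensional (an infinite semisimple socle would give an infinite strictly descending chain of subcomodules), so, writing $J=J(A)=C_0^\perp$ for the Jacobson radical, the quotient $A/J\cong C_0^*$ is a finite dimensional semisimple algebra. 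Since $A$ is left Noetherian, each power $J^n$ is finitely generated, whence every layer $J^n/J^{n+1}$ is a finitely generated module over the finite dimensional semisimple ring $A/J$ and is therefore finite dimensional.

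The crux of the argument---and the only place where the \emph{full} $D$-ring hypothesis is used, rather than mere closedness of left ideals---is to show that $J$ is nilpotent, equivalently that the coradical filtration terminates (so that the comodule $C$ has finite Loewy length). Here I would introduce the ascending chain of two-sided ideals
\[
r_n:=\mathrm{r.ann}(J^n)=\{a\in A \mid J^n a=0\},\qquad r_1\subseteq r_2\subseteq\cdots,
\]
the inclusions holding because $J^{n+1}\subseteq J^n$; these $r_n$ are the successive socle layers of ${}_AA$. As each $r_n$ is in particular a left ideal, left Noetherianity forces the chain to stabilize, say $r_N=r_{N+1}$.

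Now the $D$-ring hypothesis enters decisively: every left ideal, and in particular each power $J^n$, is a left annihilator, so $J^n=\mathrm{l.ann}(\mathrm{r.ann}(J^n))=\mathrm{l.ann}(r_n)$. Applying $\mathrm{l.ann}$ to the equality $r_N=r_{N+1}$ then yields $J^N=J^{N+1}$. Since $J^N$ is finitely generated and $J\cdot J^N=J^N$, Nakayama's lemma gives $J^N=0$. Finally the finite filtration $A\supseteq J\supseteq\cdots\supseteq J^N=0$ has finite dimensional quotients $J^i/J^{i+1}$, so $A$ is finite dimensional, as required.

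I expect the nilpotence of $J$ to be the genuine obstacle, and the example $\Bbbk[[x]]=C^*$ (with $C$ the divided power coalgebra) shows why: there $A$ is a left Noetherian profinite algebra with $C$ artinian yet infinite dimensional, and $J=(x)$ is not nilpotent. The reason the present argument excludes such behaviour is exactly that in $\Bbbk[[x]]$ the ideal $(x)$ is not an annihilator, so the identity $J^n=\mathrm{l.ann}(r_n)$ fails; it is the interplay of ``$r_n$ stabilizes by Noetherianity'' with ``$J^n=\mathrm{l.ann}(r_n)$ by the $D$-ring property'' that produces $J^N=J^{N+1}$ and thereby forces the filtration to stop.
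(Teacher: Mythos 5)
Your proof is correct, and after the common opening it takes a genuinely different route from the paper's. Both arguments begin the same way: the D-ring hypothesis makes every left ideal a left annihilator, hence closed (Proposition~\ref{1}), so $A$ is left Noetherian and $C$ is an artinian left comodule (Lemma~\ref{2}), whence $C_0$ and each $C_n$ is finite dimensional. From there the paper stays on the comodule side and uses the \emph{other} characterization of D-rings (cyclic modules embed in product powers of $A$): assuming $C_n\neq C$ for all $n$, each cyclic module $C_n^*$ embeds in a finite power $(C^*)^{k(n)}$, and since $lw(C_n^*)=n$ this forces the socle series $L_0(C^*)\subsetneq L_1(C^*)\subsetneq\cdots$ to be strictly increasing forever, contradicting Noetherianity; this needs the Loewy-length theory of comodules cited from \cite{IC}. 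You instead work entirely on the ring side with the annihilator form of the hypothesis: since each $J^n$ is a left ideal, hence a left annihilator, the double-annihilator identity $J^n={\rm l.ann}({\rm r.ann}(J^n))$ holds, so stabilization of the ascending chain ${\rm r.ann}(J^n)$ (Noetherianity) gives $J^N=J^{N+1}$, Nakayama kills $J^N$, and the finite filtration by powers of $J$ with layers that are finitely generated over the finite dimensional semisimple ring $A/J\cong C_0^*$ yields $\dim_\Bbbk A<\infty$. Your version trades the comodule Loewy-length machinery for the classical annihilator-chain-plus-Nakayama argument (it does need the identification $Jac(C^*)=C_0^\perp$, $C^*/Jac(C^*)\cong C_0^*$, which the paper also invokes, though only in Section 5); this makes it more self-contained as ring theory, and your closing remark correctly isolates why $\Bbbk[[x]]$ is excluded --- there $(x)$ is not an annihilator, so the identity $J^n={\rm l.ann}({\rm r.ann}(J^n))$ fails --- which is exactly the same role the D-ring hypothesis plays, by a different mechanism, in the paper's proof.
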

\begin{proof}
Let $A=C^*$ for some coalgebra $C$. First note that if $C^*$ is a left D-ring then any left ideal of $C^*$ is closed by Proposition \ref{1} and then $C^*$ is left Noetherian by Lemma \ref{2}. Thus, $C$ is artinian as left $C$-comodule by the duality between the latices of left ideals of $C^*$ and left coideals of $C$ (any left ideal of $C^*$ is closed). This shows that all $C_n/C_{n-1}$ are finite dimensional (such $C$ is said to be of finite type), so each $C_n$ is then finite dimensional. We show that $C_n=C$ for some $n$. Assume not; then for each $n$, $C_n^*$ is a cyclic left $C^*$-module, so there is some embedding $C_n^*\hookrightarrow (C^*)^{k(n)}$. The power here can be assumed finite since $C_n^*$ is finite dimensional.  Let $lw(M)$ denote the Loewy length of a semiartinian module $M$. Since $lw(C_n^*)=lw(C_n)=n$, we have that $(C^*)^{k(n)}$ contains left submodules of Loewy length at least $n$, and this  shows that $L_n((C^*)^{k(n)})\neq L_{n-1}((C^*)^{k(n)})$, and then also $L_n(C^*)\neq L_{n-1}(C^*)$. But this contradicts the fact that $C^*$ is left Noetherian, since the sequence of left submodules $L_0(C^*)\subsetneq L_1(C^*)\subsetneq L_2(C^*)\subsetneq \dots$ of $C^*$ does not terminate.
%Since, we have that $lw((C^*)^{k(n)})\geq lw(C_n)=n$, where $lw(M)$ denotes the Loewy length of $M$. Thus $lw(C^*)\geq n$. But this contradicts Noetherianity, since then $lw(C^*)$ is (an) infinite (ordinal), so the sequence of left submodules $L_0(C^*)\subsetneq L_1(C^*)\subsetneq L_2(C^*)\subsetneq \dots$ does not terminate.
\end{proof}

Using the results of \cite{Nak}, it is shown in \cite[Corollary 7.2]{FH} that a finite dimensional algebra which is a left (or a right) D-ring is necessarily QF. Thus we have a nice generalization of that statement from finite dimensional algebras to profinite ones:

\begin{corollary}\label{D}
A profinite algebra which is a left (or right) D-ring is necessarily a QF ring. Moreover, it is finite dimensional.
\end{corollary}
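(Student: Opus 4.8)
The plan is to obtain the corollary as an immediate composition of the finite-dimensionality result just established with the known finite-dimensional case quoted above. First I would note that if $A$ is a profinite algebra and a left D-ring, then Theorem \ref{t1} already gives that $A$ is finite dimensional, which settles the ``Moreover'' assertion. With finite dimensionality in hand, the cited result of Faith — \cite[Corollary 7.2]{FH}, resting on \cite{Nak} — applies verbatim: a finite dimensional algebra that is a left (or right) D-ring is QF. Thus the two claims of the corollary are obtained in sequence, the substantial part being Theorem \ref{t1} (already proved) and the QF conclusion being a direct appeal to the finite-dimensional setting.

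The only point deserving attention is the parenthetical ``or right.'' Since Theorem \ref{t1} is formulated for left D-rings, for a right D-ring I would invoke the symmetric version of that theorem rather than re-deriving the QF conclusion from scratch. The required symmetry is transparent at the coalgebra level: a profinite algebra is of the form $A=C^*$, and it is simultaneously the dual of the co-opposite coalgebra $C^{\mathrm{cop}}$, the passage $C\mapsto C^{\mathrm{cop}}$ interchanging left and right comodule structures. Consequently each ingredient in the proof of Theorem \ref{t1} — Proposition \ref{1}, Lemma \ref{obs}, Lemma \ref{2}, and the Loewy-length argument on $C_n^*$ — has an exact right-handed analogue, so that a right D-ring is likewise forced to be finite dimensional. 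The finite-dimensional statement of Faith already covers both the left and the right D-ring hypotheses simultaneously, so no further work is needed once finite dimensionality is secured.

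Because the heavy lifting is carried entirely by Theorem \ref{t1}, I do not expect any genuine obstacle. If anything, the one subtlety worth making explicit is precisely this reduction of the ``left (or right)'' formulation to the already-proved left case via the left-right duality of the underlying coalgebra, so as to avoid repeating the whole argument on the opposite side. After that, the implication ``finite dimensional D-ring $\Rightarrow$ QF'' is immediate from \cite[Corollary 7.2]{FH}, completing the proof.
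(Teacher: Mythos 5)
Your proposal is correct and coincides with the paper's own (implicit) argument: the corollary is obtained by combining Theorem \ref{t1} (finite dimensionality) with the cited result \cite[Corollary 7.2]{FH} of Faith--Nakayama, the right-handed case following by left-right symmetry. One small imprecision worth noting: $A$ itself is not the dual of $C^{\mathrm{cop}}$; rather $(C^{\mathrm{cop}})^*\cong A^{\op}$, so the clean way to phrase your reduction is that a right D-ring $A$ makes $A^{\op}$ a profinite left D-ring, whence $A^{\op}$ (and so $A$) is finite dimensional by Theorem \ref{t1}.
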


We can use this to see that the FGF conjecture of Faith, which asks whether a ring $R$ with the property that every finitely generated $A$-module embeds in a free one is necessarily QF, holds for that class of profinite (and of pseudocompact) algebras.

\begin{corollary}
Let $A$ be a profinite algebra. Then the following are equivalent:\\
(i) every finitely generated left $A$-module embeds in a free one.\\
(ii) every cyclic left $A$-module embeds in a free one.\\
(iii) $A$ is a finite dimensional QF algebra.
\end{corollary}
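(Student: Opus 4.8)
The plan is to prove the cycle (iii) $\Rightarrow$ (i) $\Rightarrow$ (ii) $\Rightarrow$ (iii), so that the bulk of the work reduces to packaging together Corollary \ref{D} and the Faith--Walker theorem \cite{FW} recalled in the introduction. The implication (i) $\Rightarrow$ (ii) is immediate, since every cyclic module is in particular finitely generated. For (iii) $\Rightarrow$ (i), I would quote Faith--Walker directly: a ring is QF precisely when every left module embeds in a free one, so if $A$ is finite dimensional QF then in particular every finitely generated left $A$-module embeds in a free module, which is exactly (i). Note that the full strength of Faith--Walker is more than we need here, as it gives the embedding even for arbitrary (not just finitely generated) modules.

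The one implication carrying genuine content is (ii) $\Rightarrow$ (iii). Here the key observation is that a free module $A^{(\alpha)}$, being a coproduct, sits inside the corresponding product $A^{\alpha}$; composing any embedding $A/I \hookrightarrow A^{(\alpha)}$ with this inclusion produces an embedding $A/I \hookrightarrow A^{\alpha}$ into a product power. Thus (ii) says precisely that every cyclic left $A$-module embeds in a product power of $A$, which is exactly the definition of $A$ being a left D-ring given at the start of Section 3. Corollary \ref{D} then immediately yields that $A$ is finite dimensional and QF, establishing (iii) and closing the cycle.

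I do not expect a genuine obstacle, since Theorem \ref{t1} and Corollary \ref{D} already contain the hard analysis (the closedness of annihilators via Proposition \ref{1}, the Noetherian/artinian lattice duality of Lemma \ref{2}, and the Loewy-length argument). The only point requiring care is the distinction between a \emph{free} module (a coproduct power) appearing in (i)--(ii) and the \emph{product} power appearing in the definition of a D-ring: one must use the inclusion $A^{(\alpha)} \hookrightarrow A^{\alpha}$ in the correct direction to pass from (ii) to the D-ring condition. Fortunately only this direction is needed, which is lucky, since a generic product power of $A$ need not embed back into any free module.
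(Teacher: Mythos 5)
Your proposal is correct and follows essentially the same route as the paper: reduce (ii) to the left D-ring condition (via the inclusion of the coproduct power into the product power, which the paper leaves implicit) and invoke Corollary \ref{D}, with the remaining implications being standard facts about QF rings. The only difference is cosmetic — you organize the implications as a cycle and spell out the free-versus-product-power point explicitly, which the paper's one-line proof takes for granted.
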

\begin{proof}
Obviously, (i) implies (ii) and (ii) implies that $A$ is a left D-ring, and this implies (iii) by the previous Corollary \ref{D}. Also, (iii) implies (i) and (iii) implies (ii) for any QF ring.
\end{proof}

%********************** T O \,\,\,\,\,\,\,  A D D ? **********************\\
%$[$A proof for finite dimensional algebras of the CF conjecture?$]$\\
%************************************************************

\section{Profinite PF algebras}

In this section, we approach another conjecture of Faith for the class of profinite algebras. A conjecture of particular interest in ring theory is the following statement:

(C1) If $R$ is a left (or right) semiartinian ring which is left selfinjective, then $R$ is QF.

In this full generality, it is not known whether it is true or not. Even with more restrictive conditions, the question is still open. The next statement has come to be called {\em Faith's Conjecture} by many authors, and is also an important open question. The reader is referred to \cite{FH} for a comprehensive survey.

(FC) If $R$ is a semiprimary ring (i.e. semilocal with nilpotent radical) which is left selfinjective, then $R$ is QF.

Note that, since any semiprimary ring is semiartinian, (C1) is indeed more general than (FC) (in the sense that (C1) implies (FC)).

We prove here (C1) for profinite algebras. Although this does not require the full extent of this section, it seems worthwhile to characterize those profinite algebras which are QF, or more generally left (right) PF or two sided PF. A ring is left PF if and only if it is an injective cogenerator for its left modules and PF if it is left and right PF. A ring $R$ is called left Kasch if every simple left $R$-module embeds in $R$. It is well known that $R$ is left PF if and only if $R$ is left selfinjective and left Kasch. We are also motivated by establishing the connection between these notions and those of (left or right) QcF introduced for coalgebras, that is, the connection between a coalgebra $C$ being left and/or right QcF and $C^*$ being left and/or right PF. One such connection is known (see \cite[3.3.8 \& 3.3.9]{DNR}): if $C$ is a left QcF coalgebra, then $C^*$ is right selfinjective, and $C^*$ is right selfinjective if and only if $C$ is flat as a left $C^*$-module. We introduce the following definition, which seems natural in the setting of pseudocompact algebras:

\begin{definition}
(i) Let $A$ be a pseudocompact algebra; we say it is left (right) Rat-Kasch if any simple pseudocompact $A$-module embeds in $A$. Equivalently, if $C$ is a coalgebra, we say that $C^*$ is left (right) Rat-Kasch if every simple rational left (right) $C^*$-module embeds in $C^*$. \\
(ii) A pseudocompact algebra $A=C^*$ will be called left (right) Rat-PF if it is left (right) Rat-Kasch and left (right) selfinjective.
\end{definition}

We note that as for PF rings, we have that a pseudocompact algebra $A=C^*$ is left Rat-Kasch and left selfinjective if and only if $C^*$ is left selfinjective and cogenerates all rational $C^*$-modules. Indeed, if $C^*$ is left selfinjective and left Rat-Kasch, since every rational left $C^*$-module $M$ has essential socle $M_0$, we can find an embedding $j:M_0\rightarrow (C^*)^\gamma$ into a power of $C^*$. This extends to a morphism $\overline{j}:M\rightarrow (C^*)^\gamma$, which is injective because $M_0$ is essential in $M$. We can then think of these two conditions \emph{left Rat-Kasch and left selfinjective} as being a suitable analogue notion for that of PF-rings in the context of pseudocompact algebras, which we may call \emph{left Rat-PF}.\\
In the following, we show that this notion of Rat-PF for a pseudocompact algebra $A=C^*$ is left-right symmetric and is equivalent to the coalgebra $C$ being QcF. \\
Obvioulsy, if a pseudocompact algebra is left Kasch, then it is also left Rat-Kasch. We inmediately see that:

%********************** T O   A D D ? **********************
%[left Rat-Kasch and right selfinjective <-> left Qcf (?)]
%***********************************************************

\begin{proposition}\label{qcfratkasch}
If $C$ is left QcF, then $C^*$ is left Rat-Kasch.
\end{proposition}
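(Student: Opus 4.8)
The plan is to realize a simple rational left $C^*$-module concretely inside $C$, push it into a power of $C^*$ using the QcF hypothesis, and then extract an embedding into a single copy of $C^*$ by a simplicity argument.

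First I would translate the problem into comodule language. Recall that rational left $C^*$-modules are exactly right $C$-comodules, and that any $C^*$-submodule of a rational module is again rational; hence a simple rational left $C^*$-module $S$ is precisely a simple right $C$-comodule, and (since its only nonzero submodule, being rational, is itself) it is simple as a $C^*$-module as well. Since the socle $C_0$ of $C$ regarded as a right comodule contains a copy of every isomorphism type of simple right comodule, $S$ embeds into $C$ as a right comodule, equivalently as a left $C^*$-module via the action $f\rightharpoonup m=\sum m_0 f(m_1)$. I would fix such a monomorphism of left $C^*$-modules $S\hookrightarrow C$.

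Next I would invoke the hypothesis that $C$ is left QcF. By the characterization recalled in the introduction, this provides a monomorphism of left $C^*$-modules $\varphi\colon C\hookrightarrow (C^*)^{I}$ into a (product, equivalently coproduct) power of $C^*$. Composing $\varphi$ with the inclusion $S\hookrightarrow C$ yields a monomorphism of left $C^*$-modules $S\hookrightarrow (C^*)^{I}$. Finally I would use simplicity: denoting by $\pi_i\colon (C^*)^I\to C^*$ the canonical projections, which are homomorphisms of left $C^*$-modules, the fact that the composite $S\to (C^*)^I$ is nonzero forces $\pi_i$ restricted to the image of $S$ to be nonzero for some index $i$. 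As $S$ is a simple $C^*$-module, the kernel of the composite $S\to C^*$ is a proper submodule, hence zero, so $S\hookrightarrow C^*$. This exhibits every simple rational left $C^*$-module as a submodule of $C^*$, proving that $C^*$ is left Rat-Kasch.

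I do not expect a serious obstacle here; this is why the statement can be labelled an immediate consequence. The only points requiring genuine care are the bookkeeping of the left/right and module/comodule dictionaries, so that the QcF embedding is genuinely an embedding of \emph{left} $C^*$-modules matching the structure carried by $S$, and the elementary observation that a nonzero homomorphism out of a simple module into a (co)product must factor nontrivially through at least one coordinate. Both are routine, so the content of the argument is really just the composition $S\hookrightarrow C\hookrightarrow (C^*)^I\to C^*$.
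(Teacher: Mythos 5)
Your proposal is correct and follows exactly the same route as the paper's proof: embed the simple rational left $C^*$-module (i.e.\ simple right $C$-comodule) into $C$, compose with the QcF embedding $C\hookrightarrow (C^*)^I$, and use simplicity to see that some coordinate projection restricts to a monomorphism into $C^*$. The extra detail you supply on the module/comodule dictionary and on why a nonzero coordinate projection exists is just an expansion of the paper's three-line argument, not a different method.
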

\begin{proof}
If $T$ is a rational simple left $C^*$-module, i.e. a simple right $C$-comodule, $T$ embeds in $C$.
Since $C$ is left QcF, there is an embedding $C\hookrightarrow (C^*)^I$, so $T$ embeds in $(C^*)^I$.
Now one of the projections $(C^*)^I\rightarrow C^*$ will restrict to a nonzero morphism $T\rightarrow C^*$
which must be an embedding since $T$ is simple. % we have  \cite[Proposition 1.2]{IG} or \cite[Proposition 1.4]{II}
\end{proof}

Following \cite{H}, we say that a category ${\mathcal A}$ is a {\bf quasi-Frobenius category} if it satisfies the conditions:

(1) ${\mathcal A}$ is an abelian category with enough projectives.

(2) All projective objects in ${\mathcal A}$ are injective.

\begin{proposition}\label{QFcat}
Let $C$ be a coalgebra. Then $C$ is right QcF if and only if $C$ is right semiperfect and ${\bf M}^C$ is a QF-category.
\end{proposition}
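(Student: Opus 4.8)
The plan is to transfer all the structure across the standard duality $(-)^{*}$ between finite dimensional right and left $C$-comodules. This is an exact contravariant equivalence that interchanges simple objects, injective hulls and projective covers, and in particular sends finite dimensional injectives to finite dimensional projectives and vice versa (see \cite{DNR}). Throughout I would use the left-right symmetric form of the characterization recalled in the Introduction: $C$ is right QcF if and only if $C$ is projective as a \emph{left} $C$-comodule, i.e. projective as an object of $\ ^{C}{\mathcal M}$. I also use that $C$ being right semiperfect is exactly the condition that ${\mathcal M}^{C}$ has enough projectives, which by \cite{DNR} amounts to the injective hull (in $\ ^{C}{\mathcal M}$) of every simple left comodule being finite dimensional; equivalently, the indecomposable projectives of ${\mathcal M}^{C}$ are finite dimensional, and they are precisely the duals $E(T)^{*}$ of the finite dimensional indecomposable injective left comodules $E(T)$.

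For the direct implication, suppose $C$ is right QcF. Since a QcF coalgebra is semiperfect (\cite{DNR}), $C$ is right semiperfect and ${\mathcal M}^{C}$ has enough projectives, giving condition (1) of the definition of a QF-category. It then remains to check that every projective object of ${\mathcal M}^{C}$ is injective. First I would reduce to indecomposables: over a right semiperfect coalgebra every projective right comodule is a direct sum of the finite dimensional indecomposable projectives $P=E(T)^{*}$. Writing $C$ as a left comodule in the form $C=\bigoplus_{T}E(T)^{(n_{T})}$ of its indecomposable injective summands, right QcF says $C$ is projective in $\ ^{C}{\mathcal M}$, so each summand $E(T)$ is projective there; applying $(-)^{*}$, each $P=E(T)^{*}$ is injective in ${\mathcal M}^{C}$. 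Finally, to pass to an \emph{arbitrary} projective (a possibly infinite direct sum of the $P$'s) I would invoke that ${\mathcal M}^{C}$ is locally finite, hence locally Noetherian, so that direct sums of injective objects remain injective; thus ${\mathcal M}^{C}$ is a QF-category.

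For the converse, assume $C$ is right semiperfect and ${\mathcal M}^{C}$ is a QF-category. Right semiperfectness makes each indecomposable injective left comodule $E(T)$ finite dimensional, and $C=\bigoplus_{T}E(T)^{(n_{T})}$ as a left comodule. Since a coproduct of projectives is projective, it suffices to show that each $E(T)$ is projective in $\ ^{C}{\mathcal M}$. By the duality this is equivalent to $E(T)^{*}$ being injective in ${\mathcal M}^{C}$; but $E(T)^{*}$ is exactly an indecomposable projective object of ${\mathcal M}^{C}$, and as ${\mathcal M}^{C}$ is a QF-category every projective is injective. Hence each $E(T)$ is projective, so $C$ is projective as a left comodule and therefore right QcF.

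The behaviour of $(-)^{*}$ on the relevant objects and the decomposition statements are routine and standard for (semiperfect) coalgebras. The one genuinely delicate point, and the step I would watch most carefully, is the forward passage from ``each indecomposable projective is injective'' to ``every projective is injective'': this is precisely where the finiteness of the category is indispensable, through the local Noetherian property that keeps arbitrary direct sums of injective comodules injective.
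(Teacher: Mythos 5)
Your proof is correct and follows essentially the same route as the paper's: both directions rest on the exact duality $(-)^*$ between finite dimensional right and left comodules, the decomposition of $C$ into injective envelopes of simples, and right semiperfectness to guarantee these are finite dimensional, with the converse direction matching the paper's almost verbatim. The only divergence is bookkeeping in the forward direction: where you decompose an arbitrary projective into finite dimensional indecomposables and then invoke local Noetherianness of ${\bf M}^C$, the paper instead observes that the comodules $E({}^CS)^*$ form a family of projective generators (so every projective is a summand of a coproduct of them) --- and your explicit appeal to ``direct sums of injectives are injective'' is precisely the point the paper leaves implicit in its closing ``it follows''.
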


\begin{proof} Let $S$ be a simple left $C$-comodule. Since $C$ is right semiperfect, $E(S)$ is finite dimensional. Thus, $E(S)^*$ is a finite dimensional right $C$-comodule, which is projective since $E(S)$ is injective. By hypothesis, $E(S)^*$ must also be injective, and this shows that $E(S)$ is projective. Hence, $^CC=\bigoplus_S E(S)$ is projective, and $C$ is right QcF.

Conversely, if $C$ is right QcF, it is well known that $C$ is right semiperfect. Moreover, as $^CC$ is projective, $E(^CS)$ is projective so $E(^CS)^*$ is injective. Since these comodules form a family of projective generators for ${\bf M}^C$, it follows that any projective right $C$-comodule is injective.

\end{proof}

We give one more definition. Recall that a category ${\mathcal A}$ is said to be a {\bf Frobenius} category if it satisfies the following conditions:\\
(1) ${\mathcal A}$ is an abelian category with enough projectives and with enough injectives.\\
(2) All projective objects in A are injective.\\
(3) All injective objects in A are projective

We now provide the connections between the left and right Rat-PF notions for $C^*$, the QcF notions for $C$ and categorical properties of the category of left (and that of right) $C$-comodules.

\begin{theorem}\label{t2}
Let $C$ be a coalgebra. The following assertions are equivalent:\\
(i) $C^*$ is left Rat-Kasch and left selfinjective, i.e. it is left Rat-PF.\\
(ii) $C$ is left and right quasi-co-Frobenius.\\
(iii) $C^*$ is right Rat-Kasch and right selfinjective, i.e. it is right Rat-PF.\\
(iv) ${\bf M}^C$ and $^C{\bf M}$ are Frobenius categories.
\end{theorem}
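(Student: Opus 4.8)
The plan is to prove the four equivalences by establishing a cycle together with one extra link, reducing everything to coalgebra-theoretic facts about QcF coalgebras. The most natural route is to show (ii)$\Leftrightarrow$(iv) as a categorical statement, then to derive (i) and (iii) from (ii) using left-right symmetry of condition (ii), and finally to close the loop by showing that either of the one-sided Rat-PF conditions forces $C$ to be QcF on both sides. By the symmetry of (ii) under swapping left and right, once I prove (ii)$\Rightarrow$(i) I automatically obtain (ii)$\Rightarrow$(iii) by the same argument applied to the opposite comodule category, so the real content is a single implication together with its converse.

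First I would prove (ii)$\Rightarrow$(i). Assume $C$ is left and right QcF. The left QcF hypothesis, via Proposition \ref{qcfratkasch}, gives that $C^*$ is left Rat-Kasch. For left selfinjectivity of $C^*$, I would invoke the known connection recalled in the text: $C^*$ is right selfinjective if and only if $C$ is flat as a left $C^*$-module, and a QcF coalgebra is in particular projective (hence flat) as a one-sided $C^*$-module; applying this on the appropriate side yields that $C^*$ is left selfinjective. Combining Rat-Kasch with selfinjectivity gives exactly (i). The symmetric argument, interchanging the roles of left and right, proves (ii)$\Rightarrow$(iii).

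Next I would handle the categorical equivalence (ii)$\Leftrightarrow$(iv) using Proposition \ref{QFcat}. That proposition already records that $C$ is right QcF iff $C$ is right semiperfect and $\mathbf{M}^C$ is a QF-category. To upgrade a QF-category to a Frobenius category, I need the extra condition that all injectives are projective, which is precisely dual information to ``all projectives are injective'' and should follow once $C$ is QcF on \emph{both} sides: two-sided QcF makes $^CC$ projective and $C^*$-selfinjective on both sides, so that in each comodule category injectives and projectives coincide, and there are enough of each. Conversely, if $\mathbf{M}^C$ and $^C\mathbf{M}$ are both Frobenius categories, then in particular all projectives are injective in each, so by Proposition \ref{QFcat} (after checking semiperfectness, which follows because having enough projectives in a comodule category forces semiperfectness) $C$ is both left and right QcF, giving (ii).

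The main obstacle, and the step I would treat most carefully, is closing the cycle with (i)$\Rightarrow$(ii): deducing \emph{two-sided} QcF from a purely \emph{one-sided} Rat-PF hypothesis. The difficulty is that left Rat-Kasch plus left selfinjectivity are both left-handed conditions, yet they must force right QcF as well. Here I would argue that left selfinjectivity of $C^*$ means $C$ is flat as a left $C^*$-module, which already yields that $C$ is right QcF is within reach once I also extract projectivity; the left Rat-Kasch condition, asserting that every simple rational left module embeds in $C^*$, should combine with selfinjectivity to show that $C^*$ left-cogenerates all rational modules (as the text observes, since every rational module has essential socle), and this cogenerator property is exactly what promotes flatness to the projectivity of $C$ characterizing QcF. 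Making the passage from ``flat and cogenerates'' to ``$C$ is projective, hence QcF on both sides'' precise is the crux; I expect to lean on the semiperfectness that selfinjectivity of $C^*$ entails, so that injective envelopes of simples are finite dimensional and dualizing finite-dimensional injective-projective objects transports the QF-category structure across both sides.
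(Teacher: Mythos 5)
Your handling of (ii)$\Rightarrow$(i), (ii)$\Rightarrow$(iii) and of the categorical equivalence (ii)$\Leftrightarrow$(iv) is sound and runs essentially parallel to the paper: Proposition \ref{qcfratkasch} gives Rat-Kasch, the flatness/selfinjectivity connection gives selfinjectivity, and Proposition \ref{QFcat} together with ``QcF $\Rightarrow$ $C$ projective as a comodule'' handles the Frobenius-category statement in both directions.

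The genuine gap is exactly where you locate the crux, namely (i)$\Rightarrow$(ii), and your proposed resolution would not close it, for three reasons. First, the passage from ``$C$ is flat over $C^*$ and $C^*$ cogenerates rational modules'' to ``$C$ is projective'' is asserted, not proved; flatness of $C$ as a $C^*$-module is strictly weaker than projectivity in general, and nothing in your sketch supplies the promotion. Second, the semiperfectness you plan to ``lean on'' is not available at that stage: left selfinjectivity of $C^*$ does not entail semiperfectness of $C$; in the paper's argument semiperfectness is deduced only \emph{after} left QcF has been established, not before. Third, and most seriously, even if you obtained projectivity of $C$ on one side, that is by definition just \emph{one-sided} QcF, and one-sided QcF does not imply two-sided QcF (there exist left QcF coalgebras that are not right QcF, cf.\ \cite[Ch.~3]{DNR}), so your concluding phrase ``hence QcF on both sides'' is a non sequitur. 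The paper's mechanism for two-sidedness is entirely different: from the embeddings $T\hookrightarrow C^*$, the left selfinjectivity of $C^*$ (which makes each direct summand $E(S)^*$ of $C^*$ injective as a left $C^*$-module), and the fact that $Rat$, being right adjoint to an exact inclusion, preserves injectives, one gets $E(T)\subseteq Rat(E(S)^*)$, whence $C$ embeds in a power of $C^*$ and is left QcF; then, using indecomposability of $E(S)^*$ (\cite[Lemma 1.4]{IF}) and a quasi-finiteness argument (\cite[Example 2.9]{Iprs}), one identifies $Rat({}_{C^*}C^*)=\bigoplus_{S\in\mathcal{S}_0}(E(S)^*)^{n_S}$ and shows $C^{(\mathbb{N})}\cong \bigl(Rat({}_{C^*}C^*)\bigr)^{(\mathbb{N})}$; finally the external result \cite[Theorem 1.7]{IG} on weakly $\sigma$-isomorphic modules is what converts this into the conclusion that $C$ is left \emph{and right} QcF. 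Some input of this kind is unavoidable for the one-sided-to-two-sided step, and your proposal contains nothing that plays its role.
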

\begin{proof}
(ii)$\Rightarrow$(i) follows from Propositions \ref{qcfratkasch} and \ref{QFcat} and the observation that $C$-right QcF implies $C^*$ is left selfinjective.\\
For (i)$\Rightarrow$(ii) note that for each right simple comodule $T$ there is an embedding $T\hookrightarrow C^*$. Let $C\cong\bigoplus\limits_{S\in\Ss}E(S)^{n_S}$ be a decomposition of $C$ into indecomposable injective left comodules (right $C^*$-modules), where $S$ ranges through a system of representatives $\Ss$ for simple left $C$-comodules, and $n_S$ is the multiplicity of each $E(S)$ in $C$. Similarly let $C=\bigoplus\limits_{T\in\Tt}E(T)^{p_T}$ be a decomposition of $C$ into indecomposable injective right comodules (see \cite[Chapter 2]{DNR}). Then $C^*\cong\prod\limits_{S\in\Ss}E(S)^*{}^{n_S}$ as left $C^*$-modules, and the embedding $T\hookrightarrow C^*$ produces an embedding $T\hookrightarrow E(S)^*$ for some $S\in\Ss$ as in the proof of the previous proposition. Since $T$ is rational, $T\subseteq Rat(E(S)^*)$. Now since the functor $Rat:{}_{C^*}\Mm\rightarrow \Mm^C=Rat({}_{C^*}\Mm)$ is a right adjoint to the inclusion functor $i:\Mm^C \hookrightarrow {}_{C^*}\Mm$, and because $i$ is exact, we have that $Rat$ preserves injective objects. This shows that $Rat(E(S)^*)$ is injective in $\Mm^C$, because $E(S)^*$ is injective since it is a direct summand in $C^*$ as left $C^*$-modules. Therefore, it follows that there is an embedding of the injective envelope of $T$ into $Rat(E(S)^*)$, $E(T)\subseteq Rat(E(S)^*)\subseteq C^*$. This gives an embedding of left $C^*$-modules $C=\bigoplus\limits_{T\in\Tt}E(T)^{p_T}\hookrightarrow \coprod\limits_\alpha (C^*) \hookrightarrow (C^*)^\alpha$ (for a suitable set $\alpha$) which shows that $C$ is left QcF. \\
In particular, $C$ is left semiperfect (see \cite[Chapter 3]{DNR}), i.e. $E(T)$ is finite dimensional for all $T\in\Tt$. Note that whenever $Rat(E(S)^*)\neq 0$, we can find an injective indecomposable subobject $E(T)\subseteq E(S)^*$ which would then split off since the finite dimensional injective comodule $E(T)$ is injective also as $C^*$-module (see \cite[Section 2.4]{DNR}). But since $E(S)^*$ is indecomposable (see \cite[Lemma 1.4]{IF}), we get that $E(T)=E(S)^*$. Therefore, for each $S\in \Ss$ there are 2 possibilities: either $E(S)^*$ is finite dimensional and there is some $T\in \Tt$ such that $E(S)^*\cong E(T)$ - denote $\Ss_0$ the set of these $S\in\Ss$ or $Rat(E(S)^*)=0$ - denote $\Ss'=\Ss\setminus \Ss_0$. \\
We now claim that $Rat({}_{C^*}C^*)=\bigoplus\limits_{S\in\Ss_0}(E(S)^*)^{n_S}$. If $r\in Rat({}_{C^*}C^*)=Rat(\prod\limits_{S\in\Ss}(E(S)^*)^{n_S})$, then any projection $\pi_S$ of $C^*$ to an $E(S)^*$ will give an element in $Rat(E(S)^*)$. Therefore, it follows that the coordinates $\pi_S(r)$ of $r$ corresponding to $S\in\Ss'$ are $0$, i.e. $r\in \prod\limits_{S\in\Ss_0}E(S)^*$ so $r\in Rat(\prod\limits_{S\in\Ss_0}E(S)^*)$. Now note that $\Sigma=\bigoplus\limits_{S\in\Ss_0}E(S)^*$ is a quasi-finite right comodule, that is $\Hom^C(T,\Sigma)$ is finite dimensional for all simple $T\in \Tt$. This is because each finite dimensional $E(S)^*$ is obviously isomorphic to a different $E(T)$. Then, by \cite[Example 2.9]{Iprs} (and the proof therein) it follows that $\Sigma=\bigoplus\limits_{S\in\Ss_0}E(S)^*=Rat(\prod\limits_{S\in\Ss_0}E(S)^*)$, which proves the claim. \\
Finally, we note that $\bigoplus\limits_{\NN}C\cong \bigoplus\limits_{\NN}Rat({}_{C^*}C^*)$. Indeed, we have
\begin{eqnarray*}
\bigoplus\limits_{\NN}C & \cong & \bigoplus\limits_{\NN}\bigoplus\limits_{T\in
\Tt}E(T)^{p_T} \cong \bigoplus\limits_{T\in\Tt}\coprod\limits_{\NN}E(T)\\
& \cong & \bigoplus\limits_{S\in\Ss_0}\coprod\limits_{\NN}E(S)^* \cong \bigoplus\limits_{\NN}\bigoplus\limits_{S\in \Ss}(E(S)^*)^{n_S}\\
& \cong &\bigoplus\limits_{\NN}Rat({}_{C^*}C^*)
\end{eqnarray*}
Therefore, $C$ and $Rat({}_{C^*}C^*)$ are weakly $\sigma$-isomorphic, in the terminology of \cite{IG}, i.e. coproduct powers of these objects are isomorphic as left $C^*$-modules: $C^{(\NN)}\cong (Rat({}_{C^*}C^*))^{(\NN)}$. It follows then by \cite[Theorem 1.7]{IG} that $C$ is (left and right) QcF.\\
(ii)$\Leftrightarrow$(iii) follows similarly.\\
From Proposition \ref{QFcat}, we have that (ii) implies that ${\bf M}^C$ and $^C{\bf M}$ are $QF$ categories. It is well known that both categories have enough injectives. Moreover, as $C$ is right QcF, it is projective as a left $C$-comodule, so every injective left comodule is projective and therefore $^C{\bf M}$ is a Frobenius category. Similarly, from $C$ being left QcF we deduce that ${\bf M}^C$ is a Frobenius category. So we get (ii) implies (iv).\\
Conversely, if ${\bf M}^C$ is a Frobenius category, then the (injective object) $C$ is projective as a left $C$-comodule and therefore $C$ is right QcF. Similarly, from $^C{\bf M}$ being a Frobenius category, we deduce that $C$ is left QcF. So we have (iv) implies (ii) and we are done.
\end{proof}

We can summarize the results on PF-related properties of profinite algebras:

\begin{theorem}
Let $A$ be a profinite algebra. Then the following are equivalent:\\
(i) $A$ is a left PF ring.\\
(ii) $A$ is a right PF ring.\\
(iii) $A$ is left cogenerator.\\
(iv) $A$ is right cogenerator.\\
(v) $A$ is a QF ring.\\
(vi) $A$ is a finite dimensional QF algebra.\\
(vii) $A$ is a left FGF ring.\\
(viii) $A$ is a right FGF ring.\\
(ix) $A$ is left CF.\\
(x) $A$ is right CF.\\
(xi) $A$ is a left D-ring.\\
(xii) $A$ is a right D-ring.
\end{theorem}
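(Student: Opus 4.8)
The plan is to make (vi) the hub through which all twelve conditions pass, with Corollary \ref{D} doing the heavy lifting. I would split the argument into two halves: first show that each of the other eleven conditions implies that $A$ is a left or a right $D$-ring, and then show that finite dimensional QF (condition (vi)) implies all of them. The first half, combined with Corollary \ref{D} (a profinite left or right $D$-ring is finite dimensional QF), yields (vi) from every condition; the second half closes the cycle. Note that (v) and (vi) differ only by the profiniteness-driven finiteness, which is again supplied by Corollary \ref{D}.

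For the first half, I would route the conditions to the $D$-ring notions (xi) and (xii) as follows. By the equivalence recalled earlier (a ring is a left $D$-ring exactly when every cyclic left module embeds in a power of the ring), a left cogenerator is a left $D$-ring, since every left module embeds in a product of copies of $A$; this gives (iii)$\Rightarrow$(xi) and symmetrically (iv)$\Rightarrow$(xii). As a left PF ring is by definition an injective cogenerator for left modules, (i)$\Rightarrow$(iii) and (ii)$\Rightarrow$(iv). For the free-module conditions I would use the canonical injection $A^{(\gamma)}\hookrightarrow A^\gamma$ of a coproduct into the corresponding product: a cyclic module embedding in a free module thus embeds in a power, so (ix)$\Rightarrow$(xi) and (x)$\Rightarrow$(xii); and since cyclic modules are finitely generated, (vii)$\Rightarrow$(ix) and (viii)$\Rightarrow$(x). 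Finally, the classical double-annihilator characterization of QF rings says that every left ideal of a QF ring is a left annihilator, giving (v)$\Rightarrow$(xi). Thus each of (i)--(v) and (vii)--(xii) forces $A$ to be a left or right $D$-ring, and Corollary \ref{D} promotes this to (vi).

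For the second half I would derive everything from (vi). A finite dimensional QF algebra is QF, so (vi)$\Rightarrow$(v). By the Faith--Walker theorem \cite{FW} every module over a QF ring embeds in a free module; restricting to finitely generated and then to cyclic modules gives (vii),(viii) and hence (ix),(x). A QF ring is left and right self-injective and left and right Kasch, hence left and right PF, giving (i),(ii); being in addition a self-injective cogenerator it is a left and right cogenerator, giving (iii),(iv); and the double-annihilator property returns (xi),(xii). This completes the equivalences.

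The only point demanding attention --- rather than a true obstacle --- is keeping straight the difference between a free module (the coproduct $A^{(\gamma)}$) and a power (the product $A^\gamma$), which is what formally separates the CF/FGF notions from the $D$-ring and cogenerator notions. This is harmless because the natural map $A^{(\gamma)}\hookrightarrow A^\gamma$ is injective, so embedding in a free module always entails embedding in a power. Apart from this bookkeeping, the statement carries no new difficulty: all the genuine finiteness is concentrated in Corollary \ref{D} (ultimately in Theorem \ref{t1}), and the present theorem merely organizes standard QF ring theory around that result.
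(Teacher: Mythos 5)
Your proposal is correct, and in substance it is the same proof as the paper's: all the genuine content sits in Corollary \ref{D}, and everything else is standard QF ring theory. The difference is organizational, and it is worth noting. The paper runs two six-term cycles, $(v)\Rightarrow(i)\Rightarrow(iii)\Rightarrow(vii)\Rightarrow(ix)\Rightarrow(xi)$ closed by $(xi)\Rightarrow(v)$ via Corollary \ref{D}, and ``similarly'' the mirror cycle through (ii), (iv), (viii), (x), (xii); you instead make (vi) a hub, funnelling each condition into the D-ring conditions (xi)/(xii) and then radiating back out of (vi). Your decomposition is arguably the more careful one: the paper's chain contains the arrow $(iii)\Rightarrow(vii)$ (left cogenerator implies left FGF), which is not a valid implication for arbitrary rings --- a cogenerator gives embeddings into \emph{products} of copies of $A$, while FGF demands embeddings into \emph{free} modules, and there is no general passage from products back to coproducts. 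That step is only legitimate if one reroutes it through Corollary \ref{D} (cogenerator $\Rightarrow$ D-ring $\Rightarrow$ finite dimensional QF $\Rightarrow$ FGF by Faith--Walker), which is precisely what your hub-and-spoke arrangement does explicitly: you send (iii) directly to (xi), which is trivially valid, and recover (vii) from (vi) afterwards. So both proofs rest on the same two pillars (Corollary \ref{D} and Faith--Walker/classical QF facts), but in your version every individual arrow is valid as stated, whereas the paper's chain should be read as implicitly factoring through Corollary \ref{D} at the cogenerator-to-FGF step.
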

\begin{proof}
We have (v)$\Rightarrow$(i)$\Rightarrow$(iii)$\Rightarrow$(vii)$\Rightarrow$(ix)$\Rightarrow$(xi) and (xi)$\Rightarrow$(v) from Corollary \ref{D}. Similarly, the equivalences to the right follow.

%(i)$\Rightarrow$(iv) Let $A=C^*$, with $C$ a coalgebra. Since $A$ is left PF, it will be left Kasch, so it is also left Rat-Kasch. Also, it is left selfinjective, and by Theorem \ref{t2} we get that $C$ is QcF. In particular, it is (left and right) semisimple. This shows that as left $C^*$-modules, $C^*=\prod\limits_{S\in\Ss}(E(S)^*)^{n_S}$; again, for example by \cite[Proposition 1.2]{IG} or \cite[Proposition 1.4]{II}, each $E(S)^*$ is isomorphic to some injective envelope of a right comodule $E(T)$, so it has simple socle. Therefore, by Proposition \ref{3}, we have that the socle of $C^*$ is $\bigoplus\limits_{S\in\Ss}s(E(S)^*)^{n_S}$, which is a rational module. Now, if $C_0$ is finite dimensional, by Lemma \ref{nonratsimples}, there is a simple non rational left module $M$. As $C^*$ is left Kasch, $M$ embeds in $s(C^*)$, contradicting the fact that it is not rational. Thus, $C_0$ is finite dimensional. Since $C$ is semiperfect, this implies that $C$ and $A=C^*$ are finite dimensional. Since $A$ is selfinjective, it is QF.\\
%(ii)$\Rightarrow$(iv) is analogous, while (iii)$\Rightarrow$(i), (iii) implies (ii) and (iv)$\Rightarrow$(iii) are obvious.
\end{proof}

\begin{remark} Recall that a ring $R$ is left FPF (finitely pseudo-Frobenius) if every finitely generated faithful left module is a generator. If $A$ is a left semiartinian and left FPF profinite algebra, then $A$ is semilocal (see Proposition \ref{semi}) and by Tachikawa's theorem \cite[Theorem 1.9]{FP} A is left PF. Hence by the preceeding result A is QF. This answers questions 9 and 10 of Faith's book \cite{FP} for profinite algebras.
\end{remark}

\section{(C1) and (FC) for profinite algebras}

For each type $S$ of simple left $C$-comodule denote $C_S$ its associated coalgebra, that is, $C_S=\sum\{S'|S\cong S'\subset C\}$, and let $A_S=C_S^*$. We then have $C_0=\sum\limits_{S\in\Ss}C_S$, and $Jac(C^*)=C_0^\perp$, $C^*/Jac(C^*)=C^*/C_0^\perp\cong C_0^*\cong \prod\limits_{S\in\Ss}A_S$. Here $C_S$ are simple coalgebras and $A_S$ are simple finite dimensional algebras.
 We have then that a pseudocompact algebra $C^*$ is semilocal if and only if $C_0$ is finite dimensional. Indeed, if $C_0$ is finite dimensional (i.e. there are only finitely many isomorphism types of simple left (right) comodules), as $A/Jac(A)\cong C_0^*$, we get that $C_0^*$ is finite dimensional semisimple and there are only finitely many types of simple left (right) $A$-modules. The converse statement follows easily too. Moreover, in this case, every simple left (right) $A$-module, being a left (right) $\frac{A}{J(A)}$ module, is rational.

\begin{proposition}\label{semi}
Let $C$ be a coalgebra such that $C^*$ is left (or right) semiartinian. Then $C^*$ is semilocal.
\end{proposition}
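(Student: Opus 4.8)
The plan is to prove the contrapositive-flavoured reduction. By the observation recorded just before the statement, $C^*$ is semilocal precisely when its coradical $C_0$ is finite dimensional, equivalently when the set $\Ss$ of isomorphism types of simple left comodules is finite (recall $C_0=\sum_{S\in\Ss}C_S$ with each $C_S$ finite dimensional). So I would assume $\Ss$ is infinite and derive that $C^*$ cannot be left semiartinian. The key reduction is to pass to the semisimple quotient: we are given $C^*/Jac(C^*)\cong R:=\prod_{S\in\Ss}A_S$, a product of the simple finite dimensional algebras $A_S$. Since every nonzero module over the quotient $R$ is a nonzero $C^*$-module with the same socle (all relevant simples being annihilated by $Jac(C^*)$), it suffices to produce a single nonzero $R$-module with zero socle, since a left semiartinian ring is exactly one for which every nonzero left module has nonzero socle.

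First I would compute $soc({}_RR)=\bigoplus_{S\in\Ss}A_S=:I$. Let $e_S\in R$ be the central idempotent that is the identity in the $S$-coordinate and $0$ elsewhere. Any nonzero left submodule $T$ of $R$ has $e_ST\neq 0$ for some $S$ (otherwise every coordinate of every element of $T$ vanishes), and if $T$ is simple the centrality of the orthogonal family $(e_S)$ forces $e_ST=T$ for exactly one $S$, i.e. $T$ lies inside the single coordinate $A_S$. Hence the simple submodules of ${}_RR$ are precisely the minimal left ideals of the factors $A_S$, and their sum is $I=\bigoplus_S A_S$.

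Next comes the heart of the argument: showing $soc(R/I)=0$. Suppose $\bar T\subseteq R/I$ were a simple submodule and pick $0\neq\bar z\in\bar T$, so that $\bar T=R\bar z$. Writing $z=(z_S)_S$, the hypothesis $\bar z\neq 0$ means the support $T_z=\{S\mid z_S\neq 0\}$ is infinite, and one checks $Rz=\prod_{S\in T_z}A_Sz_S$ with each $A_Sz_S\neq 0$, whence $R\bar z\cong \big(\prod_{S\in T_z}A_Sz_S\big)\big/\big(\bigoplus_{S\in T_z}A_Sz_S\big)$. Splitting $T_z=T_1\sqcup T_2$ into two infinite pieces decomposes this quotient as a direct sum of two nonzero $R$-modules, contradicting simplicity. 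Therefore $R/I$ has no simple submodule; since $\Ss$ is infinite, $R/I\neq 0$, which furnishes the desired nonzero module with zero socle and completes the left (and, symmetrically, right) argument.

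I expect the computation $soc(R/I)=0$ to be the only genuine obstacle; everything else is bookkeeping. The delicate point is ruling out ``simple modules at infinity'' in the quotient: the central idempotents $e_S$ all die in $R/I$, so they are useless there, and one must instead exploit the infinitude of the support directly via the $T_z=T_1\sqcup T_2$ splitting. This is exactly the place where an ultrafilter argument could alternatively be deployed---parametrising the maximal ideals of $R$ containing $I$ by free ultrafilters on $\Ss$ and checking that none of the associated simple modules embeds in $R/I$---but the elementary splitting seems cleaner and avoids any appeal to ultrafilters.
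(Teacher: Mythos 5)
Your argument is correct, but it takes a genuinely different route from the paper: the paper's entire ``proof'' is a citation of \cite[Proposition 1.1]{INT}, whereas you give a direct, self-contained argument. Your reduction to the semisimple quotient $R=C^*/Jac(C^*)\cong\prod_{S\in\Ss}A_S$ is legitimate (the $C^*$-action on any $R$-module factors through $R$, so the two submodule lattices, and hence the two socles, coincide), the computation $soc({}_RR)=\bigoplus_{S}A_S=I$ is right, and the key step is sound: for $\bar z\neq 0$ in $R/I$ the support $T_z$ of $z$ is infinite, $Rz=\prod_{S\in T_z}A_Sz_S$ because the coordinates of $r\in R$ can be chosen independently, $Rz\cap I=\bigoplus_{S\in T_z}A_Sz_S$, and splitting $T_z$ into two infinite halves exhibits $R\bar z$ as a direct sum of two nonzero submodules, so no nonzero cyclic submodule of $R/I$ is simple; hence $R/I$ is a nonzero module with zero socle and $C^*$ is not left semiartinian (the right-handed case is symmetric). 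As for what each approach buys: the paper's citation is short and defers to a result established in the pseudocompact setting, while your proof is elementary, stays entirely inside the ring $\prod_S A_S$, and, as you note, avoids ultrafilters --- a nice contrast with Lemma \ref{nonratsimples} of the paper, where maximal ideals of $\prod_S A_S$ attached to non-principal ultrafilters are exactly what produce non-rational simple modules; your splitting argument shows, from the socle side, that such ``simples at infinity'' never occur inside $R/I$. One small phrasing quibble: your parenthetical ``all relevant simples being annihilated by $Jac(C^*)$'' is not the real reason the reduction works --- the reason is the coincidence of submodule lattices just mentioned --- but this does not affect correctness.
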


\begin{proof}
It follows from \cite[Proposition 1.1]{INT}.
\end{proof}

\begin{theorem}\label{FC} [(C1) for Profinite Algebras]
If $A$ is a profinite algebra which is semiartinian and left self-injective then $A$ is a finite dimensional QF algebra.
\end{theorem}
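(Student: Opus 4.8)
### Proof Strategy for Theorem 5.3 (C1 for Profinite Algebras)

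The plan is to reduce the semiartinian, left self-injective hypothesis to the left Rat-PF condition established in Theorem~\ref{t2}, and then finally to the D-ring condition of Corollary~\ref{D}, which forces finite dimensionality and QF. First I would invoke Proposition~\ref{semi} to deduce that $A = C^*$ is semilocal, hence $C_0$ is finite dimensional; in particular there are only finitely many isomorphism types of simple left (right) $C$-comodules, and every simple left (or right) $A$-module is rational (as noted in the paragraph preceding Proposition~\ref{semi}, since simple modules are $A/J(A)$-modules and $A/J(A) \cong C_0^*$). This is the crucial payoff of semilocality: it collapses the distinction between the abstract Kasch condition and the rational Kasch condition, so that I only need to verify a condition on \emph{rational} simple modules.

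Next I would establish that $A$ is left Rat-Kasch. The key point is that left semiartinian plus semilocal should force $C$ to be what is called right semiperfect, so that $C^*$ is left Kasch in the rational sense: each simple rational left $C^*$-module (i.e.\ simple right $C$-comodule) $T$ embeds into $C$, and since we want it to embed in $C^*$ I would argue via the injective envelope. Concretely, since every simple left $A$-module is rational and $A$ is left self-injective, I would show each simple rational module embeds in $A = C^*$ using that the socle decomposition of $C$ meets $C^*$ appropriately. Combined with the self-injectivity hypothesis, this yields that $A$ is left Rat-PF in the sense of the definition preceding Proposition~\ref{qcfratkasch}.

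With left Rat-PF in hand, Theorem~\ref{t2} immediately gives that $C$ is left and right quasi-co-Frobenius, hence (by the equivalence there) that $A = C^*$ is also right Rat-PF, and that both comodule categories are Frobenius. At this stage the semiartinian hypothesis, which controls the Loewy structure, together with the QcF property should let me conclude that every cyclic left module embeds in a power of $A$, i.e.\ that $A$ is a left D-ring. I would verify this by checking that left self-injectivity plus the QcF cogenerator property make $A$ a cogenerator for its rational modules, and that semiartinianity guarantees all the relevant cyclic modules are rational (built from the finitely many simple types). Once $A$ is a left D-ring, Corollary~\ref{D} finishes the argument: $A$ is finite dimensional and QF.

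The main obstacle I anticipate is the passage from the abstract self-injective, semiartinian data to the concrete embedding of simple \emph{rational} modules into $C^*$, i.e.\ establishing the Rat-Kasch property. The subtlety is that self-injectivity is an abstract module-theoretic hypothesis on $A = C^*$, whereas Rat-Kasch and the machinery of Theorem~\ref{t2} are phrased in terms of the coalgebra $C$ and its rational modules; bridging these requires carefully using semilocality to ensure that the abstract simple modules coincide with the rational simple comodules and that self-injectivity of $A$ descends to the appropriate injectivity statement for the finite-dimensional injective envelopes $E(S)$. Verifying that left semiartinian forces $C$ right semiperfect (so that these injective envelopes are finite dimensional and behave well under dualization) is where the technical care will concentrate.
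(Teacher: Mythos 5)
There is a genuine gap --- in fact two. First, the step you yourself flag as the place ``where the technical care will concentrate'' is precisely the heart of the theorem, and you give no argument for it. Asserting that ``left semiartinian plus semilocal should force $C$ to be right semiperfect'' is not a known fact that can be invoked; it is essentially what must be proved. The paper's proof supplies exactly this missing content: semiartinianity puts some simple $T$ inside each $E(S)^*$; semilocality (Proposition~\ref{semi}) makes $T$ rational; left self-injectivity makes $E(S)^*$ an injective $C^*$-module, and since ${\rm Rat}$ is a right adjoint of an exact inclusion it preserves injectives, so $E(T)$ embeds in $E(S)^*$ via some $\varphi$. The key move is then to dualize: $\varphi$ induces $\psi:E(S)\rightarrow E(T)^*$, $\psi(y)(x)=\varphi(x)(y)$, and $\psi(E(S))\not\subset T^\perp$; since $E(T)^*$ is local and cyclic with unique maximal submodule $T^\perp$, $\psi$ is surjective, so $E(T)^*$ is a cyclic quotient of the rational module $E(S)$, hence rational and finite dimensional, forcing $E(T)\cong E(S)^*$. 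This gives each $E(S)$ finite dimensional, and with only finitely many types $S$ and finite multiplicities, $C$ is finite dimensional and $A$ is QF. Note also that the paper never establishes (and does not need) the full Rat-Kasch property you aim for --- that \emph{every} simple rational module embeds in $C^*$ --- only that for each $S$ \emph{some} $T$ satisfies $E(T)\cong E(S)^*$; it is not clear the stronger Kasch-type statement can be obtained before finite dimensionality is already known.

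Second, your concluding step is circular. You propose to pass from QcF plus semiartinianity to the left D-ring condition by arguing that ``semiartinianity guarantees all the relevant cyclic modules are rational.'' But a cyclic rational $C^*$-module is automatically finite dimensional (rational modules are locally finite), so the claim that every cyclic left $A$-module is rational already asserts that $A=A\cdot 1$ is finite dimensional --- which is the conclusion you are trying to prove. Semiartinianity and semilocality only give that every simple subquotient is rational; extensions of rational modules by rational modules need not be rational, so no such rationality of arbitrary cyclic modules follows. Moreover, even if you had succeeded in establishing Rat-PF, the detour through D-rings and Corollary~\ref{D} is unnecessary: Theorem~\ref{t2} would give that $C$ is left and right QcF, hence left and right semiperfect, so all injective envelopes $E(S)$ are finite dimensional, and semilocality then yields $C$ finite dimensional directly, with QF following from finite dimensionality plus self-injectivity.
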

\begin{proof}
Let $A=C^*$, for $C$ a coalgebra. With notations as before, as $A$ is semiartinian, for each simple left comodule $S$, $E(S)^*$ contains some simple left module $T$. As $A$ is semilocal, $T$ is rational. Again as before, we note that in fact $E(T)$ must embed in $E(S)^*$, so let $\varphi:E(T)\rightarrow E(S)^*$ be a morphism of left $C^*$-modules. This yields a morphism $\psi:E(S)\rightarrow E(T)^*$, given by $\psi(y)(x)=\varphi(x)(y)$. We see that $\psi(E(S))\not\subset T^\perp$, where $T^\perp=\{y^*\in E(T)^*| y^*(T)=0\}$. Indeed, otherwise $\varphi(T)(E(S))=\psi(E(S))(T)=0$, so $\varphi(T)=0$ i.e. $T\subseteq \ker(\varphi)=0$ which is not true. Again using \cite[Lemma 1.4]{IF} we have that $E(T)^*$ is cyclic local with $T^\perp$ its unique maximal, and since $\psi(E(S))\not\subset T^\perp$ we get $\psi(E(S))=E(T)^*$. Therefore, $E(T)^*$ is rational as a quotient of a rational module ($E(S)$) and so it is finite dimensional since it is cyclic. Proceeding as before, we get that $E(T)$ is injective and it splits off in $E(S)^*$, and therefore we must have $E(T)\cong E(S)^*$. This reasoning holds for each simple left comodule $S$, so $C$ is finite dimensional, since there are only finitely many simple left $C$-comodules. Therefore $A$ is a finite dimensional left selfinjective algebra, thus QF.
\end{proof}

%\begin{corollary} [(C1) and Faith's Conjecture (FC) for Profinite Algebras]
%If $A$ is a profinite semiartinian algebra which is left selfinjective, then $A$ is a finite dimensional QF algebra. The same conclusion holds under the hypothesis of (FC).
%\end{corollary}
%\begin{proof}
%Immediate from Proposition \ref{semi} and Theorem \ref{FC}.
%\end{proof}

\begin{corollary} [Faith's Conjecture (FC) for Profinite Algebras]
If $A$ is a profinite semiprimary algebra which is left selfinjective, then $A$ is a finite dimensional QF algebra.
\end{corollary}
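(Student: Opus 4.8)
The plan is to deduce (FC) directly from (C1), that is, from Theorem \ref{FC}, by showing that the hypothesis ``semiprimary'' is already subsumed by the hypothesis ``semiartinian'' used there. As was noted in the discussion preceding the statement of (FC), every semiprimary ring is semiartinian, so a profinite semiprimary left selfinjective algebra falls automatically within the scope of Theorem \ref{FC}.

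First I would unpack the definition: $A$ being semiprimary means that $A$ is semilocal, so that $A/J$ is semisimple where $J=J(A)$, and that $J$ is nilpotent, say $J^n=0$. I would then examine the radical filtration
$$A\supseteq J\supseteq J^2\supseteq\cdots\supseteq J^n=0.$$
Each successive quotient $J^i/J^{i+1}$ is annihilated by $J$ and hence is a module over the semisimple ring $A/J$; consequently it is a semisimple left (and right) $A$-module. This exhibits $A$ as a left (and right) $A$-module admitting a finite filtration with semisimple factors, so $A$ has finite Loewy length and is in particular left (and right) semiartinian.

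Having established that $A$ is a profinite semiartinian left selfinjective algebra, I would simply invoke Theorem \ref{FC}, which asserts precisely that such an algebra is finite dimensional and QF, thereby completing the argument.

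Since the reduction is purely formal, I do not expect any genuine obstacle: the entire content of the statement is carried by Theorem \ref{FC} (the profinite case of (C1)), and the only point requiring (elementary) justification is that nilpotency of the radical forces a finite radical filtration with semisimple quotients, hence semiartinianness. The nontrivial mathematics therefore lies entirely in the earlier theorem, and this corollary records the classical fact that semiprimary is a special case of semiartinian.
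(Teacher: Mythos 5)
Your proposal is correct and follows exactly the paper's route: the paper's proof is simply ``Immediate from Theorem \ref{FC}'', relying on the remark made earlier in Section 4 that every semiprimary ring is semiartinian, which is precisely the reduction you carry out. Your only addition is to spell out that classical fact via the radical filtration $A\supseteq J\supseteq\cdots\supseteq J^n=0$ with semisimple quotients, which the paper leaves implicit.
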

\begin{proof}
Immediate from Theorem \ref{FC}.
\end{proof}

\section{Some more remarks}
We note an interesting fact, which shows that the left Kasch profinite algebras are a priori more complicated than the left Rat-Kasch profinite algebras:

\begin{lemma}\label{nonratsimples}
Let $A$ be a profinite algebra and assume there are infinitely many types of simple right (or, equivalently, of left) rational $A$-modules. Then there are simple non-rational left (and also right) $A$-modules.
\end{lemma}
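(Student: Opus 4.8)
The plan is to reduce the statement to the semisimple quotient $A/Jac(A)$ and then to exhibit a simple module on which all the ``coordinate'' idempotents vanish; such a module cannot be rational. Recall from the previous section that $Jac(A)=C_0^\perp$ and $A/Jac(A)\cong C_0^*\cong\prod_{S\in\Ss}A_S$, where $\Ss$ indexes the isomorphism types of simple comodules and each $A_S=C_S^*$ is a finite dimensional simple algebra. Since every simple left $A$-module is annihilated by $Jac(A)$, the simple left $A$-modules are exactly the simple left modules over $R:=\prod_{S\in\Ss}A_S$. By hypothesis $\Ss$ is infinite, so the two-sided ideal $J:=\bigoplus_{S\in\Ss}A_S$ of finitely supported elements is proper in $R$ (indeed $1\notin J$).

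First I would produce the candidate module. By Zorn's lemma choose a maximal left ideal $\mathfrak m$ of $R$ containing $J$ (this is where a non-principal ultrafilter on $\Ss$ enters, via the center $\prod_S\Bbbk\subseteq Z(R)$); this is possible because $R/J\neq 0$. Then $M:=R/\mathfrak m$ is a simple left $R$-module, hence a simple left $A$-module, and each idempotent $e_S\in J\subseteq\mathfrak m$ acts as $0$ on $M$. Viewing $\mathfrak m$ as a maximal left ideal of $A$ containing $C_0^\perp$, it suffices to show that $\mathfrak m$ is \emph{not open} in the finite topology: a rational cyclic module $A\bar 1$ must have open annihilator $\Ann(\bar 1)=\mathfrak m$, since the action of $C^*$ on a rational element factors through a finite dimensional (hence closed) quotient of $C^*$, so that $\Ann(\bar1)\supseteq X^\perp$ for some finite dimensional $X\subseteq C$.

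The heart of the argument is then a finite-topology computation. Suppose for contradiction that $\mathfrak m$ is open, so $\mathfrak m\supseteq X^\perp$ for some finite dimensional $X\subseteq C$. Then $X\cap C_0$ is finite dimensional, hence contained in $\bigoplus_{S\in F}C_S$ for some finite $F\subseteq\Ss$. Passing to the quotient $A\to C_0^*=R$, the image of $X^\perp$ contains $(X\cap C_0)^{\perp}$ computed inside $C_0^*$ (any $g\in C_0^*$ vanishing on $X\cap C_0$ extends to an $f\in C^*$ vanishing on all of $X$), and $(X\cap C_0)^{\perp}\supseteq\big(\bigoplus_{S\in F}C_S\big)^{\perp}=\prod_{S\notin F}A_S$. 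Thus the image $\overline{\mathfrak m}$ of $\mathfrak m$ in $R$ contains $\prod_{S\notin F}A_S$; combined with $\bigoplus_{S\in F}A_S\subseteq J\subseteq\overline{\mathfrak m}$ this forces $\overline{\mathfrak m}=R$, contradicting that $\mathfrak m$ is proper. Hence $\mathfrak m$ is not open, $M$ is a simple non-rational left $A$-module, and the same construction performed on the right (using that the two infinitude hypotheses are equivalent, as both count the simple subcoalgebras of $C_0$) produces a simple non-rational right $A$-module.

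I expect the main obstacle to be the non-rationality step rather than the existence of $\mathfrak m$, which is routine. The delicate points are, first, translating ``rational'' into openness of the pointwise annihilator, and second, the $\perp$-bookkeeping relating a finite dimensional $X\subseteq C$ to a \emph{cofinite} family of coordinates of $\prod_S A_S$. The crux is that any finite dimensional $X$ can only ``see'' finitely many of the $C_S$; this is precisely what lands all idempotents $e_S$ with $S\notin F$ in the image of $X^\perp$, and, together with $J\subseteq\mathfrak m$, collapses $\overline{\mathfrak m}$ to the whole of $R$.
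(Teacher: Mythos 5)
Your proof is correct, but it takes a genuinely different route from the paper's in both of its key steps. The paper makes the same reduction to $A/Jac(A)\cong C_0^*\cong\prod_{S\in\Ss}A_S$, but then invokes the correspondence between maximal \emph{two-sided} ideals of this product and ultrafilters on $\Ss$: a non-principal ultrafilter yields a maximal two-sided ideal $M$ different from every $M_{S_0}=\prod_{S\in\Ss\setminus\{S_0\}}A_S$, a simple module $T$ over $A/M$ then satisfies $T\not\cong S_0^*$ for all $S_0$ (else $M\subseteq \Ann(S_0^*)=M_{S_0}$ would force $M=M_{S_0}$ by maximality), and non-rationality follows because the $S_0^*$ exhaust the simple rational left $C^*$-modules. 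You instead take a maximal \emph{left} ideal $\mathfrak{m}\supseteq J=\bigoplus_{S}A_S$ by Zorn's lemma --- note that the paper's $M$ also contains $J$, since ``non-principal'' for an ultrafilter is exactly ``contains all cofinite sets,'' so the two constructions produce the same kind of module --- and you prove non-rationality directly from the topology: every element of a rational module has open annihilator, while $\pi^{-1}(\mathfrak{m})$ cannot be open because a finite dimensional $X\subseteq C$ meets $C_0$ inside $\bigoplus_{S\in F}C_S$ for finite $F$, so the image of $X^\perp$ in $R=\prod_S A_S$ contains $\prod_{S\notin F}A_S$, which together with $\bigoplus_{S\in F}A_S\subseteq J\subseteq\mathfrak{m}$ would force $\mathfrak{m}=R$. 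Your extension step (any $g\in C_0^*$ vanishing on $X\cap C_0$ extends to $f\in C^*$ vanishing on $X$) is the one point requiring care, and you handled it correctly. What your route buys is self-containedness: it needs neither the ultrafilter classification of maximal two-sided ideals nor the fact that the $S_0^*$ are \emph{all} the simple rational modules, only the definition of rationality and elementary $\perp$-bookkeeping; what the paper's route buys is the structural picture (simple non-rational modules arising from non-principal ultrafilters), which is of independent interest. Two cosmetic points: your parenthetical that Zorn's lemma ``is where a non-principal ultrafilter enters, via the center'' is inessential and not quite accurate, since $\mathfrak{m}\cap Z(R)$ need not be a maximal ideal of $Z(R)$; and you silently identify $\mathfrak{m}$ with $\pi^{-1}(\mathfrak{m})$, which is harmless because $\pi^{-1}(\mathfrak{m})\supseteq\Ker\pi$, but should be said once. (A shortcut you could also have used: every $e_S$ acts as $0$ on $R/\mathfrak{m}$, whereas on any simple rational module some $e_S$ acts as the identity; this finishes the argument without the topological computation, at the cost of using the classification of simple rationals that you avoided.)
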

\begin{proof}
It is enough to prove this for the cosemisimple case, since any simple $A$-module is a simple $A/Jac(A)\cong C_0^*$-module and viceversa. It is a fairly well known fact that the maximal two-sided ideals in $C_0^*=\prod\limits_{S\in\Ss}A_S$ are in one to one correspondence with ultrafiters $\Uu$ on the power set of $\Ss$ ($A_S$ are simple algebras), and that this correspondence takes principal ultrafilters into maximal two-sided ideals $M$ of the form $M_{S_0}=\prod\limits_{S\in\Ss\setminus \{S_0\}}A_S$ for $S_0\in\Ss$, and $A/M_{S_0}$ is a simple finite dimensional algebra whose only type of simple left module is (isomorphic to) $S_0^*$. In fact, $M_{S_0}=ann(S_0^*)$, for all $S_0\in\Ss$. \\
It is then enough to take a non-principal ultrafilter $\Uu$ (e.g. one containing the Frechet fitler) and consider a simple left $\frac{A}{M}$-module $T$ (they are all isomorphic). Then $T\not\cong S_0^*$ for $S_0\in \Ss$, since otherwise $M\subseteq ann(T)=ann(S_0^*)=M_{S_0}$, so $M=M_{S_0}$, which is impossiple by choice. But as $S_0^*$ for $S_0\in\Ss$ are the simple rational left $C^*$-modules (simple right $C$-comodules), it follows that $T$ is not rational.
%(Note: If $M_1$ and $M_2$ are different maximal two-sided ideals of a ring and $S_1$ and $S_2$ are simple $A/M_1$ and $A/M_2$ modules respectively, then $S_1\not\cong S_2$. Otherwise, $S_1\cong A/M\cong S_2$ with $M$ maximal left ideal, and since $M_1$ and $M_2$ annihilate $S_1$ and $S_2$, it follows that $M_1,M_2\subseteq M$ so $M_1+M_2\subseteq M$. Therefore, since $M_1,M_2\subseteq M_1+M_2\neq A$, by maximality it follows that $M_1=M_2$, a contradiction.)
\end{proof}

We now characterize the left Kasch property for profinite algebras.

\begin{proposition}\label{fdsimple}
Let $A$ be a profinite algebra, and let $C$ be any coalgebra such that $A=C^*$. If $H$ is a simple $A$-module which embeds in $A$, then $H$ is a rational $C^*$-module.
\end{proposition}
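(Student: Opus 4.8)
The plan is to show that a simple module $H$ embedded in $A=C^*$ must in fact land inside the rational part $\Rat({}_{C^*}C^*)$, and then to extract rationality of $H$ from that. The key structural fact I would use is the decomposition $C^*\cong\prod_{S\in\Ss}(E(S)^*)^{n_S}$ as left $C^*$-modules, coming from the injective decomposition $C\cong\bigoplus_S E(S)^{n_S}$ of $C$ as a left comodule, exactly as in the proof of Theorem \ref{t2}. Given an embedding $\iota:H\hookrightarrow C^*$, I would compose with the projections $\pi_S:C^*\to E(S)^*$; since $H\neq 0$ and $H$ is simple, at least one composite $\pi_S\circ\iota$ is nonzero, hence (again by simplicity of $H$) injective. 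So it suffices to understand when a simple $A$-module embeds in a single indecomposable factor $E(S)^*$.

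First I would note that each $E(S)^*$ is a cyclic local left $C^*$-module with a unique maximal submodule; this is the content of \cite[Lemma 1.4]{IF}, already invoked repeatedly in the excerpt. The crucial point is that the simple top of $E(S)^*$ is rational: the socle of $E(S)^*$ (as a module) dualizes the top of $E(S)$, and more to the point, any simple submodule $H\subseteq E(S)^*$ necessarily meets the rational part. Concretely, I would argue that a simple submodule $H$ of $E(S)^*$ must equal the socle of $E(S)^*$, and then show that $\soc(E(S)^*)$ is rational, so that $H\subseteq\Rat({}_{C^*}C^*)$. Since the rational part is by definition the union of all rational (comodule) submodules, membership of $H$ in $\Rat({}_{C^*}C^*)$ gives that $H$ is itself a rational $C^*$-module, as a submodule of a rational module is rational.

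The step I expect to be the main obstacle is establishing that the relevant simple submodule of $E(S)^*$ is genuinely rational, rather than merely sitting inside a potentially non-rational cyclic local module. The subtlety is precisely the phenomenon highlighted in Lemma \ref{nonratsimples}: $E(S)^*$ can have non-rational simple modules in its composition structure, but I must show the \emph{socle} (the simple submodules, where an embedding necessarily lands) is rational. I would handle this by the duality argument used in the proof of Theorem \ref{FC}: a morphism $\varphi:H\hookrightarrow E(S)^*$ dualizes to a morphism $\psi:E(S)\to H^*$ via $\psi(y)(x)=\varphi(x)(y)$, and nondegeneracy of the pairing forces $H^*$ to be a quotient of the rational comodule $E(S)$, hence $H^*$ is rational and finite dimensional, whence $H\cong H^{**}$ is rational. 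This reduces everything to the finite-dimensionality extracted from cyclicity plus rationality of $E(S)$, completing the argument.
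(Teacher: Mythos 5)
Your setup follows the paper exactly: decompose $C^*\cong\prod_{S}(E(S)^*)^{n_S}$ as left $C^*$-modules, compose with a projection to get a nonzero, hence injective, map $\varphi\colon H\to E(S)^*$, and reduce to showing that a simple submodule of $E(S)^*$ is rational. (Incidentally, you neither need nor can easily justify that $H$ \emph{equals} the socle of $E(S)^*$; the inclusion $H\subseteq {\rm soc}(E(S)^*)$ is automatic and equality is never used.) The genuine gap is in your closing duality step. The dualized map $\psi\colon E(S)\to H^*$, $\psi(y)(x)=\varphi(x)(y)$, is indeed a nonzero morphism of right $C^*$-modules, but nondegeneracy of the pairing does not make it surjective, so the claim that $H^*$ is a \emph{quotient} of $E(S)$ is unsupported. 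In the proof of Theorem \ref{FC}, where the paper runs precisely this kind of argument, surjectivity onto $E(T)^*$ comes from \cite[Lemma 1.4]{IF}: $E(T)^*$ is cyclic local with unique maximal submodule $T^\perp$, and the image is not contained in $T^\perp$. That lemma applies there because $T$ is a \emph{rational} simple; invoking anything like it for $H^*$ presupposes that $H$ is rational, which is exactly what you are trying to prove, so the argument is circular. Moreover, even granting what is true --- that the image $W=\psi(E(S))$ is a nonzero rational submodule of $H^*$, being a quotient of the comodule $E(S)$ --- this does not pass back to $H$: nondegeneracy gives an injection $H\to W^*$, but $W^*$ is rational only when $W$ is finite dimensional, and nothing forces that, since $E(S)$ and its quotients can be infinite dimensional, and the dual of an infinite-dimensional rational module need not be rational (witness $C^*$ itself, the dual of the rational module $C$). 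For the same reason, ``rational and finite dimensional, whence $H\cong H^{**}$'' assumes precisely the finiteness you never established; this matters because a non-rational simple $H$ (the phenomenon of Lemma \ref{nonratsimples}) is typically infinite dimensional.

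What is missing is a finiteness reduction that uses the simplicity of $H$ a \emph{second} time, and it is the heart of the paper's proof. Fix $h\neq 0$ in $H$ and $x\in E(S)$ with $\varphi(h)(x)\neq 0$, and put $N=x\cdot C^*$: a cyclic subcomodule of the rational module $E(S)$, hence finite dimensional. Now restrict functionals to $N$: define $\psi_0\colon H\to N^*$ by $\psi_0(s)=\varphi(s)\vert_N$. Given $0\neq s\in H$, simplicity yields $c^*\in C^*$ with $h=c^*\cdot s$, and then $\psi_0(s)(x\cdot c^*)=(c^*\cdot\varphi(s))(x)=\varphi(c^*\cdot s)(x)=\varphi(h)(x)\neq 0$, so $\psi_0(s)\neq 0$ and $\psi_0$ is injective. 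Since $N^*$ is finite dimensional, it is rational, and hence so is $H$. Note that this is your pairing $\psi$ after all, but restricted to the single cyclic subcomodule $x\cdot C^*$ rather than to all of $E(S)$; that restriction is exactly what closes your gap.
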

\begin{proof}
Assume there is $H\hookrightarrow C^*$ be a monomorphism of left $C^*$-modules. Since $C^*=\prod\limits_{C_0=\bigoplus S} E(S)^*$ is a product of duals of injective envelopes of simple left $C$-comodules, we can find a nonzero projection $\psi:H\rightarrow E(S)^*$, which is injective since $H$ is simple. Fix $h\in H, h\neq 0$. By the injectivity, $\psi(h)\neq 0$ so there is $x\in E(S)$ such that $\psi(h)(x)\neq 0$. Let $N=x\cdot C^*$, which is a finite dimensional left $C$-comodule. Let $\psi_0:H\rightarrow N^*$ defined by $\psi_0(s)=\psi(s)\vert_{N}$. Note that for any $0\neq s\in H$, there is $c^*\in C^*$ such that $h=c^*\cdot s$, because $H=C^*\cdot s$ ($H$ is simple). Then we have
\begin{eqnarray*}
\psi_0(s)(x\cdot c^*) & = & \psi(s)(x\cdot c^*) = \psi(c^*\cdot s)(x) \\%(\,\,\,{\rm the\,left\,module\,structure\,of\,the\,dual})\\
& = & \psi(h)(x) \neq 0
\end{eqnarray*}
This shows that $\psi_0(s)\neq 0$. Therefore, $\psi_0$ is injective. Thus, $H$ embeds in $N^*$, and since $N^*$ is rational (since $N$ is a finite dimensional left $C$-comodule), it follows that $H$ is a simple rational $C^*$-module.
\end{proof}

\begin{corollary}
The following are equivalent for a profinite algebra $A$.\\
(i) $A$ is left Kasch.\\
%(ii) $A$ is right Kasch.\\
(ii) $A$ is semilocal and left Rat-Kasch.\\
(iii )$A$ is almost connected (i.e. $A/Jac(A)$ is finite dimensional) and left Rat-Kasch.\\
(Note: if $A$ is semilocal, then left (right) Rat-Kasch does not depend on the coalgebra $C$ for which $C^*=A$).
\end{corollary}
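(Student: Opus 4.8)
The plan is to run the cycle (i) $\Rightarrow$ (ii) $\Rightarrow$ (i), treating (ii) $\Leftrightarrow$ (iii) as essentially formal. My starting point is the structure $A/Jac(A)\cong C_0^*\cong\prod_{S\in\Ss}A_S$, where each $A_S$ is a finite dimensional simple algebra. I would first observe that an infinite product of nonzero algebras is never Artinian (it carries a strictly descending chain of ideals, namely those consisting of tuples that vanish on larger and larger finite sets of indices), so $A/Jac(A)$ is semisimple precisely when $\Ss$ is finite, which is in turn precisely when $A/Jac(A)$ is finite dimensional. Thus for profinite algebras the hypotheses ``semilocal'' and ``almost connected'' coincide, and since both (ii) and (iii) add the same left Rat-Kasch condition, their equivalence is immediate.

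For (ii) $\Rightarrow$ (i) I would use that semilocality forces $C_0$ to be finite dimensional, so that every simple left $A$-module is a module over $A/Jac(A)\cong C_0^*$ and is therefore rational, as noted in the discussion preceding Proposition \ref{semi}. The left Rat-Kasch hypothesis embeds every simple rational left $A$-module into $A$; combining these two facts embeds every simple left $A$-module into $A$, which is exactly the left Kasch property.

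The hard part, and the step I expect to be the main obstacle, is (i) $\Rightarrow$ (ii), since here the finiteness of $\Ss$ must be extracted from a purely module-theoretic hypothesis; this is where Proposition \ref{fdsimple} and Lemma \ref{nonratsimples} are meant to combine. Assuming $A$ is left Kasch, every simple left $A$-module embeds in $A$, hence is rational by Proposition \ref{fdsimple}. I would then argue by contradiction: if $\Ss$ were infinite there would be infinitely many isomorphism types of simple rational left $A$-modules, and Lemma \ref{nonratsimples} would then produce a simple non-rational left $A$-module, contradicting that all simples are rational. Hence $\Ss$ is finite and $A$ is semilocal, while left Rat-Kasch is an immediate weakening of left Kasch.

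Finally, I would deduce the parenthetical note directly from the equivalence (i) $\Leftrightarrow$ (ii): once $A$ is known to be semilocal, ``left (right) Rat-Kasch'' is equivalent to ``left (right) Kasch,'' and the latter is formulated entirely in terms of simple left $A$-modules and their embeddings into $A$, making no reference to any coalgebra; it therefore cannot depend on the choice of $C$ with $C^*=A$.
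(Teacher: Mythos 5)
Your proposal is correct and follows essentially the same route as the paper: the implication (i) $\Rightarrow$ (ii) via Proposition \ref{fdsimple} (Kasch forces all simples rational) combined with Lemma \ref{nonratsimples} (infinitely many rational simples would force a non-rational simple), and the converse via the observation that semilocality makes every simple module rational, so that Rat-Kasch and Kasch coincide. Your explicit justification that semilocal and almost connected agree for profinite algebras (an infinite product of nonzero algebras is never semisimple) is a small fill-in of a fact the paper establishes in the discussion opening Section 5, not a different argument.
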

\begin{proof}
Let $C$ be a coalgebra such that $C^*=A$. If $C_0$ is infinite dimensional, by Proposition \ref{nonratsimples} it follows that there are some non rational $C^*$-modules. But by Proposition \ref{fdsimple} if $A$ is left (or right) Kasch, then any simple left $A$-module is $C^*$-rational. This shows (i)$\Rightarrow$(ii),(iii). For the converse, note that if $A=C^*$ is semilocal, equivalently, $C_0$ is finite dimensional, then any simple $A=C^*$-module is $C^*$-rational, since $A/Jac(A)\cong C_0^*$ is finite dimensional semisimple. Thus, in this case, left Rat-Kasch is equivalent to left Kasch.
\end{proof}

%\begin{proposition}\label{3}
%Let $A$ be a profinite algebra and $(X_S)_{S\in\Ss}$ be a family of right (left) $A$-modules with simple socle, where $(r_S)_{S\in\Ss}$ are natural numbers. Then the socle of $\prod\limits_{S\in\Ss}X_S^{r_S}$ is $\bigoplus\limits_{S\in\Ss}s(X_S)^{r_S}$.
%\end{proposition}
%\begin{proof}
% We regard each $S$ embedded into $X_S$, so the socle is $s(X_S)=S$.
%Let $x=(x_S)_{S\in\Ss}\in s(\prod\limits_{S\in\Ss}X_S^{r_S})$. All projections $\pi_S:\prod\limits_{S\in\Ss}X_S^{r_S} \rightarrow X_S^{r_S}$ take the socle into the socle. Thus $x_S \in s(X_S)^{r_S}, \forall S\in \Ss$ and therefore  $s(\prod\limits_{S\in\Ss}X_S^{r_S})\subseteq \prod\limits_{S\in\Ss}s(X_S)^{r_S}$. Note that the method of \cite[Proposition 1.1]{INT} applies here to show that $\prod\limits_{S\in\Ss}s(X_S)^{r_S}/\bigoplus\limits_{S\in\Ss}s(X_S)^{r_S}$ contains no simple submodules. Therefore $s(\prod\limits_{S\in\Ss}X_S^{r_S})\subseteq s(\prod\limits_{S\in Ss} s(X_S)^{r_s} ) \subseteq s(\bigoplus\limits_{S\in\Ss}X_S^{r_S})\subseteq \bigoplus\limits_{S\in\Ss}s(X_S)^{r_S}$. The converse inclusion follows from the fact that all $r_S$ are natural numbers.
%\end{proof}

\end{document}